\newcounter{cases}
\newcounter{subcases}[cases]
\newenvironment{mycase}
{
    \setcounter{cases}{0}
    \setcounter{subcases}{0}
    \newcommand{\case}
    {
        \par\indent\stepcounter{cases}\textbf{Case \thecases.}
    }
    
}
{
    \par
}
\renewcommand*\thecases{\arabic{cases}}
\newcolumntype{C}[1]{>{\centering\arraybackslash}p{#1}}
\newtheorem{thm}{Theorem}[section]
\theoremstyle{theorem}
\theoremstyle{definition}
\newtheorem{example}[thm]{Example}
\theoremstyle{theorem}
\newtheorem{cor}[thm]{Corollary}
\newtheorem{lem}[thm]{Lemma}
\newtheorem{defn}[thm]{Definition}
\theoremstyle{definition}
\newtheorem{remark}[thm]{Remark}
\theoremstyle{theorem}
\newtheorem{prop}[thm]{Proposition}
\DeclareMathOperator{\CM}{CM}
\DeclareMathOperator{\Cl}{Cl}
\DeclareMathOperator{\GL}{GL}
\DeclareMathOperator{\SL}{SL}
\DeclareMathOperator{\PSL}{PSL}
\DeclareMathOperator{\Gal}{Gal}
\DeclareMathOperator{\disc}{disc}
\DeclareMathOperator{\car}{char}
\DeclareMathOperator{\End}{End}
\DeclareMathOperator{\Aut}{Aut}
\DeclareMathOperator{\id}{id}
\DeclareMathOperator{\Split}{Split}
\DeclareMathOperator{\sus}{ss}
\newcommand{\F}{\mathbf{F}}
\newcommand{\Z}{\mathbf{Z}}
\newcommand{\Zee}{\mathbf{Z}}
\newcommand{\Zhat}{\widehat\Zee}
\newcommand{\Q}{\mathbf{Q}}
\newcommand{\Que}{\mathbf{Q}}
\newcommand{\R}{\mathbf{R}}
\newcommand{\OO}{\mathcal{O}}
\newcommand{\tor}{\text{tor}}
\newcommand{\ab}{\text{ab}}
\newcommand{\iso}{\cong}
\newcommand{\pgotic}{\mathfrak{p}}
\newcommand{\gothp}{\mathfrak{p}}
\newcommand{\gothq}{\mathfrak{q}}
\newcommand{\Kbar}{\overline{K}}
\newcommand{\Ogotic}{\mathcal{O}}
\newcommand{\rad}{\text{rad}}
\newcommand{\genlegendre}[4]{%
  \genfrac{(}{)}{}{#1}{#3}{#4}%
  \if\relax\detokenize{#2}\relax\else_{\!#2}\fi
}
\newcommand{\legendre}[3][]{\genlegendre{}{#1}{#2}{#3}}
\newcommand{\tto}{\longrightarrow}
\newcommand{\mapright}[1]
    {{\stackrel{#1}{\longrightarrow}}}
\renewcommand{\mod}{\operatorname{mod}\ }
\renewcommand{\le}{\leqslant}\renewcommand{\leq}{\leqslant}
\renewcommand{\ge}{\geqslant}\renewcommand{\geq}{\geqslant}
\title[Cyclic reduction densities for Elliptic Curves]
{Cyclic reduction densities for Elliptic Curves}
\subjclass[2010]{11G05; 11R45}
\date{\today}
\keywords{elliptic curves over number fields, cyclic reduction}
\author[F. Campagna, P. Stevenhagen]{Francesco Campagna, Peter Stevenhagen}
\address{ 
Mathematisch Instituut \\ 
Universiteit Leiden   \\ 
Leiden\\
Netherlands}
\email{psh@math.leidenuniv.nl}
\address{
Leibniz Universit\"at Hannover \\
Institut f\"ur Algebra, Zahlentheorie und Diskrete Mathematik\\
Welfengarten 1 \\
30167 
Hannover \\
Germany}
\email{campagna@math.uni-hannover.de}
\begin{document}

\begin{abstract}
For an elliptic curve $E$ defined over a number field $K$, 
the heuristic density of the set of primes of $K$ for which $E$
has cyclic reduction is given by an inclusion-exclusion sum $\delta_{E/K}$ 
involving the degrees of the $m$-division fields $K_m$ of~$E$ over~$K$.
This density can be proved to be correct under assumption of GRH.

For $E$ without complex multiplication (CM), we show that
$\delta_{E/K}$ is the product of an explicit non-negative rational number
reflecting the finite entanglement of the division fields
of $E$ and a universal infinite Artin-type product.
For $E$ admitting CM over $K$ by a quadratic order $\OO$, 
we show that $\delta_{E/K}$ admits a
similar `factorization' in which the Artin type product also depends on $\OO$.

For $E$ admitting CM over $\Kbar$ by an order 
$\OO\not\subset K$, which occurs for $K=\Que$,
the entanglement of division fields over $K$ is non-finite.
In this case we write $\delta_{E/K}$ as the sum
of two contributions coming from the primes of $K$ that are split and inert in $\OO$.
The split contribution can be dealt with by the previous methods, the inert contribution is of a different nature.

We determine the ways in which the density can vanish, and provide numerical examples of the different kinds of densities.

\end{abstract}

\maketitle

\tableofcontents

\section{Introduction}  

\noindent
Let $E$ be an elliptic curve defined over a number field $K$,
and $\gothp$ a prime of $K$ for which~$E$ has good reduction.
Then the point group $E_\gothp(k_\gothp)$
of the reduced curve over the residue class
field $k_\pgotic$ is a finite abelian group on at most two generators.
If one generator suffices, we call $\gothp$ a prime
\emph{of cyclic reduction} of $E$.
The question considered in this paper is whether the set
$S_{E/K}$ of primes of cyclic reduction of $E$ is infinite and,
if so, whether it has a density inside the set of all primes of $K$.

Serre \cite{Serre3} observed in 1977 that this question is very similar
to the Artin primitive root problem (1927),
which asks for the density of the set
of primes $\pgotic$ of $K$ for which a fixed element $a\in K^*$ is a primitive root modulo $\gothp$, i.e., the unit group of the residue class field $k_\gothp$ at $\gothp$ 
satisfies $k_\gothp^*=\langle a \bmod \gothp\rangle$.
In this situation, these primes are (up to finitely many primes of
`bad reduction' for $a$) the primes $\pgotic$ that do not split completely
in any of the `$\ell$-division fields'
$K_\ell=K(\zeta_\ell, \sqrt[\ell]{a})=\Split_K(X^\ell-a)$
of the element $a$ at \emph{prime values} $\ell\in \Z$.
A heuristic inclusion-exclusion argument \cite{Artin}*{pp.viii-ix} that
goes back to Artin suggests that the set of such primes has a
natural density that can be expressed in terms of the degrees of the
$m$-division fields $K_m$ as
\begin{equation}
\label{deltaaK}
\delta_{a, K}=\sum_{m=1}^{\infty} \frac{\mu(m)}{[K_m:K]}.
\end{equation}
Here $\mu$ denotes the M\"obius function.
In 1967, Hooley \cite{Hooley} proved Artin's conjecture for $K=\Que$ 
under the assumption of the Generalized Riemann Hypothesis (GRH).

The set $S_{E/K}$ of primes of cyclic reduction of an elliptic curve
$E/K$ can be characterized in a similar way, in terms of the
\emph{elliptic division fields\/} $K_m=K(E[m](\Kbar))$ of $E$ over $K$
obtained by adjoining to $K$ the coordinates of all $m$-torsion points of $E$
defined over an algebraic closure $\Kbar$ of $K$.
It is, up to finitely many~$\pgotic$, the set of
primes that do not split completely in any of the 
$\ell$-division fields $K_\ell$ of $E$ at prime values $\ell\in \Z$
(Corollary \ref{S_EKdescription}).
Unsurprisingly, the associated heuristic density for $S_{E/K}$
is given by an inclusion-exclusion sum
\begin{equation}
\label{deltaEK}
\delta_{E/K}=\sum_{m=1}^{\infty} \frac{\mu(m)}{[K_m:K]}
\end{equation}
that, at least typographically, is \emph{identical} to \eqref{deltaaK}.
It is the limit of the finite sums
\begin{equation}
\label{delta(n)}
\delta_{E/K}(n)= \sum_{m|n} \frac{\mu(m)}{[K_m:K]}
\end{equation}
for $n$ tending to infinity under the partial ordering of divisibility.
Under this ordering, we have
\begin{equation}
m|n \Longrightarrow \delta_{E/K}(m) \ge \delta_{E/K}(n)\geq 0,
\end{equation}
so the limit exists and is non-negative.
As each value $\delta_{E/K}(n)$ is an upper bound for the
\emph{upper density} of the set $S_{E/K}$, so is $\delta_{E/K}$.
Proving unconditionally that $S_{E/K}$ is infinite in case
$\delta_{E/K}$ in \eqref{deltaEK} is positive is an open problem, just as in
Artin's original setting \eqref{deltaaK}.
If $\delta_{E/K}$ vanishes, then we can show unconditionally that $S_{E/K}$ is finite.

For $K=\Q$, Serre showed that, under GRH, the set $S_{E/\Q}$ does have
a density, and that it is equal to $\delta_{E/\Q}$.
His proof, which is in the spirit of Hooley, was published
in 1983 by Murty~\cite{Murty1}.
It can be extended to general number fields
along lines that are actually already contained 
in the very same paper \cite{Murty1}.
For completeness, we summarize the argument in Section 2.

The number $\delta_{E/K}$ in \eqref{deltaEK}, which we now know to be 
the density (under GRH) of the set
$S_{E/K}$ of primes of $K$ of cyclic reduction,
is defined by a 
series that converges rather slowly, and it is unsuitable to determine whether $\delta_{E/K}$ vanishes.
We remedy this situation by `factoring' the infinite sum as the product
of a finite sum and an infinite non-vanishing product.
The outcome depends on whether $E$ has complex multiplication~(CM)
over an algebraic closure $\Kbar$ of~$K$, leading to the distinction of
three mutually exclusive cases for an elliptic curve $E$ defined over $K$:
\begin{mycase}
            \case $E$ without CM over $\Kbar$;
            \case $E$ with CM that is defined over $K$;
            \case $E$ with CM that is not defined over $K$.
\end{mycase}

\smallskip\noindent 
These cases are the subject of Theorems \ref{factorizationofdeltaEK},
\ref{thm:density containing the CM field} and
\ref{thm:density not containing the CM field}, respectively.
In the first two cases, the finite sum that goes into the `factorization'
of $\delta_{E/K}$ is a sum $\delta_{E/K}(n)$ from~\eqref{delta(n)},
where $n=N(E,K)$ is the product
\begin{equation}
\label{NEK}
N(E,K)=\prod_{\ell\in T_{E/K}} \ell\in\Z_{>0}
\end{equation}
of the primes in an explicit \emph{finite} set $T_{E/K}$ of
\emph{critical prime numbers} for $E$ over $K$.
\medskip

{\bf Case 1.} 
For $E$ without CM over $\Kbar$,
the set $T_{E/K}$ of \emph{critical non-CM primes}
for $E$ consists of those prime numbers $\ell$ 
satisfying at least one of the following conditions:
\begin{enumerate}
\item $\ell \mid 2\cdot 3\cdot 5 \cdot\Delta_K$, 
with $\Delta_K$ the discriminant of $K$;
\item $\ell$ lies below a prime of bad reduction of $E$;
\item the Galois group $\Gal(K_\ell/K)$ is not isomorphic to $\GL_2(\F_\ell)$.
\end{enumerate}
Condition (3) is only satisfied for finitely many $\ell$ by Serre's open image theorem \cite{Serre}.
We prove the following `factorization theorem'.
\begin{thm}
\label{factorizationofdeltaEK}
Let $E/K$ be an elliptic curve without CM, and $n=N(E,K)$ as in~\eqref{NEK}.
Then $\delta_{E/K}$ can be factored as a product
$$
\delta_{E/K}=\delta_{E/K}(n)\cdot
\prod_{\ell \nmid n, \ \ell \ \text{prime}}
\left( 1- \frac{1}{(\ell^2-1)(\ell^2-\ell)} \right)$$
of the finite sum $\delta_{E/K}(n)\in\Que_{\ge0}$ in \eqref{delta(n)}
and a non-vanishing infinite product.
\end{thm}
\noindent
Theorem \ref{factorizationofdeltaEK}
reflects the fact that some \emph{critical $n$-division field} $K_n$ of $E$
and the fields $K_\ell$ for primes $\ell\nmid n$ form
a \emph{linearly disjoint family\/} of finite Galois extensions of $K$,
as in Definition \ref{def:linearlydisjoint} and Theorem \ref{teorema cruciale}.
The existence of such an integer $n$ follows from the open image theorem
in \cite{Serre}, but not the explicit value $n=N(E,K)$ that we provide.
Our $N(E,K)$ will not in general be the smallest such value.

The finite sum $\delta_{E/K}(n)$ in Theorem \ref{factorizationofdeltaEK},
which may vanish, is the density of the set of primes 
of $K$ that do not split completely in any of the 
$\ell$-division fields $K_\ell$ at the critical primes $\ell\in T_{E/K}$.
At primes $\ell\notin T_{E/K}$ the fields $K_\ell$
have \emph{maximal degree}
\begin{equation}
\label{maxdegree}
[K_\ell:K]=\#\GL_2(\F_\ell)= (\ell^2-1)(\ell^2-\ell),
\end{equation}
so the factor at $\ell\notin T_{E/K}$ in the infinite product
is the density of the set of primes not splitting completely in 
$K\subset K_\ell$.
Just as in the case of $\delta_{a,K}$ in \eqref{deltaaK}, we deduce
that $\delta_{E/K}$ can be written as a product
\begin{equation}
\label{delta=cAinfty}
\delta_{E/K}= c_{E/K} \cdot A
\end{equation}
of a rational number $c_{E/K}\in\Q_{\ge0}$ and
a universal \emph{non-CM elliptic Artin constant}
\begin{equation}
\label{Ainfty}
A= \prod_{\ell \ \text{prime}}
        \left( 1- \frac{1}{(\ell^2-1)(\ell^2-\ell)} \right)
        \approx 0.8137519.
\end{equation}
We have $c_{E/K}=1$ when the division fields $K_\ell$ for $\ell$ prime
all assume the maximal degree in~\eqref{maxdegree}
\emph{and} they form a linearly disjoint family over $K$.
When the first condition is not satisfied, a 
decomposition that is more satisfactory than~\eqref{delta=cAinfty} is 
$\delta_{E/K}=\alpha_{E/K}\cdot A_{E/K}$,
where the \emph{naive density}
\begin{equation}
\label{naive density}
A_{E/K} =\prod_{\ell \ \text{prime}} \left( 1- \frac{1}{[K_\ell:K]} \right).
\end{equation}
depends only on the degrees $[K_\ell:K]$,
and the rational \emph{entanglement correction factor}
$\alpha_{E/K}\in\Q_{\ge0}$ reflects the dependencies that exist between
finitely many critical $K_\ell$.

The density $\delta_{E/K}$ vanishes if and only if the finite rational
sum $\delta_{E/K}(n)$ in Theorem~\ref{factorizationofdeltaEK} vanishes.
In this case we know unconditionally that $S_{E/K}$ is a 
finite set, as there is a `finite' obstruction 
to cyclic reduction in the division field $K_n$.
We call this obstruction \emph{non-trivial} if the corresponding naive density
$A_{E/K}$ is non-zero, meaning that for no prime $\ell$,
the $\ell$-division field $K_\ell$ is equal to $K$.
In this case it is the entanglement correction factor $\alpha_{E/K}$ that vanishes.

Non-trivial vanishing does not occur over $K=\Que$,
and `natural' examples are rare. 
We show however that non-trivial vanishing does happens for all elliptic curves $E/K$ with naive density $A_{E/K}>0$, in the sense that over a suitable base change $K\subset K'$, we have $\delta_{E/K'}=0\ne A_{E/K'}$.
We do not know whether non-trivial vanishing can occur 
over $K=\Que(j(E))$.
\medskip

In the {\bf CM-cases 2 and 3}, the endomorphism ring
$\OO=\End_{\Kbar} (E)\ne\Zee$ 
is an imaginary quadratic order.
In this case, the finite set $T_{E/K}$ of \emph{critical CM-primes} of $E/K$
is defined as consisting of those prime numbers $\ell$
for which at least one of the following holds:
\begin{enumerate}
\item
$\ell$ is divisible by a prime of bad reduction of $E/K$, or
\item
$\ell$ divides the product $\Delta(\OO)\cdot\Delta_K$ of 
the discriminants of $\OO$ and $K$.
\end{enumerate}
For CM-curves $E/K$,
the degree $[K_m:K]$ of the division fields $K_m$
only grows quadratically in~$m$, and the ramification
can be controlled by class field theory.
This makes it possible to prove \emph{unconditionally} that~$S_{E/K}$
has density~$\delta_{E/K}$,
as the explicit versions of the Chebotarev density theorem without GRH
in \cite{Lagarias-Montgomery-Odlyzko} now suffice to handle the error terms.
For $K=\Q$, this has been written down in \cite{Cojocaru},
and the proof given there extends without essential changes
to arbitrary number fields~$K$.

\medskip\noindent
{\bf Case 2.}
Suppose that $K$ contains the CM-field $F=\mathrm{Frac}(\OO)$.
In this case, the situation is structurally reminiscent
of the non-CM-case, 
but now almost all $\ell$-division fields have
a Galois group $\Gal(K_\ell/K)\iso (\OO/\ell\OO)^*$ that also depends on $\OO$,
not just on $\ell$ as in the case of the 
generic group $\GL_2(\F_\ell)$ that we had before.
To ease notation, we write $D=\Delta(\OO)$ for the discriminant of $\OO$ and define
the \emph{Artin constant} of the order $\OO$ as
\begin{equation}
\label{AO}
A_\OO=\prod_{\ell \ \text{prime}} A_{\OO,\ell}, 
\end{equation}
with local factors $A_{\OO,\ell}$ at $\ell$ given by 
\begin{equation}
\label{AOl}
A_{\OO,\ell} = 1 - \frac{1}{\# (\OO/\ell\OO)^*} =
\begin{cases}
1-(\ell-1)^{-2} &\mbox{if } \legendre{D}{\ell}=1; \\
1-(\ell^2-1)^{-1} &\mbox{if } \legendre{D}{\ell}=-1; \\
1-(\ell^2-\ell)^{-1} &\mbox{if } \legendre{D}{\ell}=0.
\end{cases}
\end{equation}
The Artin constant for $\OO$ vanishes if and only if we have $A_{\OO,2} =0$, or, equivalently, for$\Delta(\OO)\equiv1\bmod8$.

In the case where $\OO$ (and therefore $F$) is contained in $K$,
the non-maximality and entanglement of the division fields
$K_\ell$ of $E$ over $K$ is `finite', just as in the non-CM case.
We have the following CM-analogue of Theorem \ref{factorizationofdeltaEK}.
\begin{thm}
\label{thm:density containing the CM field}
Suppose $E/K$ has CM by an order $\OO\subset K$, and define
$n=N(E,K)$ as in~\eqref{NEK}.
Then $\delta_{E/K}$ can be factored as the product
\begin{equation} 
\label{eq:density containing the CM field}
\delta_{E/K}=\delta_{E/K}(n)\cdot
\prod_{\ell \nmid n, \ \ell \ \text{prime}}
A_{\OO,\ell}
\end{equation}
of the finite sum $\delta_{E/K}(n)\in\Que_{\ge0}$ from \eqref{delta(n)}
and an infinite product.
\end{thm}
\noindent
In Theorem \ref{thm:density containing the CM field},
we have $\delta_{E/K}=c_{E/K}\cdot A_\OO$ for some $c_{E/K}\in\Que_{\ge0}$.
We deduce that in the case where $K$ contains the CM-order $\OO$,
all densities $\delta_{E/K}$ are rational multiples of the Artin constant
$A_F=A_{\OO_F}$ associated to the ring of integers $\OO_F$ of the CM-field $F$.

\medskip\noindent
{\bf Case 3.}
We finally suppose that $E$ has CM by an order $\OO$
\emph{not} contained in $K$.
Then the entanglement of the division
fields $K_\ell$ is no longer of finite nature, and the
density $\delta_{E/K}$ can sometimes be established
by easy arguments.
\begin{example}
\label{example:delta=1/2}
The elliptic curve $E/\Que$ given by $y^2 = x^3 + x$
has good reduction outside~2, and its CM-order $\OO=\Zee[i]$
of discriminant $D=-4$ is generated by the automorphism
\[
[i]: (x,y)\mapsto (-x,iy)
\]
of $E$ defined over $F=\Que(i)$.
The 2-division field of $E$ is $K_2={\Split}_\Que(x^3+x)=\Que(i)$, and
for $\ell>2$ the $\ell$-division field $K_\ell$ contains $\Que(i)$,
as for any non-zero $\ell$-torsion point $(x,y)\in E(K_\ell)$, we have
$y\ne0$ and $(-x,iy)\in E[\ell](K_\ell)$, hence $i\in K_\ell$.

If $p$ is a prime of cyclic reduction for $E$, then it is odd and does
not split completely in $K_2=\Que(i)$.
Conversely, any odd $p$ that does not split completely in $K_2=\Que(i)$
is a prime of cyclic reduction for $E$, as it does not split completely
in any other $\ell$-division field $K_\ell\supset K_2$.
Thus, the set $\{p: p\equiv 3\bmod 4\}$ of primes of cyclic reduction of $E$
has density $\delta_{E/\Que}=1/2$.
\end{example}

\noindent 
In the case $\OO\not\subset K$ we treat the cyclicity of 
reduction modulo a prime $\gothp\notin T_{E/K}$
according to the splitting behavior of $\gothp$ in the
quadratic extension $K\subset KF$.

For $\gothp$ inert in $K\subset KF$, the reduction 
$E\mod \gothp$ is supersingular, and it follows from
Proposition \ref{prop:containment F in odd-division fields} that in this case,
the cyclicity of the resulting point group $E(k_\gothp)$ \emph{only} depends
on the splitting behavior of $\gothp$ in $K\subset K_2$.
These supersingular $\gothp$ contribute a rational density $\delta_{E/K}^{\sus}$
to $\delta_{E/K}$.
For $\gothp\notin T_{E/K}$ split in $K\subset KF$ as
$\gothp\OO_{KF}=\gothq_1\gothq_2$,
we have $E(k_\gothp)\iso E(k_{\gothq_1})\iso E(k_{\gothq_2})$,
so there is a 2 to 1 correspondence between the set of 
primes $\gothq\notin T_{E/KF}$ of cyclic ordinary reduction for the
base changed elliptic curve $E/KF$
and the set of primes $\gothp\notin T_{E/K}$ of cyclic ordinary reduction
for $E/K$.
\begin{thm}
\label{thm:density not containing the CM field}
Let $E/K$ be a CM-elliptic curve with CM-order $\OO$ and 
CM-field $F\not\subset K$.
Then the cyclic reduction density $\delta_{E/K}$ can be written as
\begin{equation} \label{eq:density not containing the CM field}
\delta_{E/K}=\delta_{E/K}^{\sus} + 
\textstyle \frac{1}{2}\delta_{E/KF},
\end{equation}
where the supersingular density $\delta_{E/K}^{\sus}$ satisfies
$\delta_{E/K}^{\sus}\in\{0, \frac{1}{4}, \frac{1}{2}\}$, and
$\delta_{E/KF}$ admits a factorization as in Theorem \ref{thm:density containing the CM field}.
\end{thm}
\noindent
In cases of Theorem \ref{thm:density not containing the CM field} where
$\delta_{E/KF}$ vanishes but $\delta_{E/K}^{\sus}$ does not, we obtain
examples such as Example \ref{example:delta=1/2}
where $\delta_{E/K}$ is rational and
cyclicity of reduction modulo $\gothp$ only depends on the splitting of
$\gothp$ in $K\subset K_2$.

\bigskip\noindent
{\bf Acknowledgments. }
The first author is supported by ANR-20-CE40-0003 Jinvariant.
The second author was funded by a research grant of the 
Max-Planck-Institut f\"ur Mathematik in Bonn.
Both authors thank the institute in Bonn for 
its financial support and its inspiring atmosphere.

\section{The density of the set of primes of cyclic reduction}

\noindent
This section gives the basic splitting criterion (Corollary \ref{S_EKdescription})
for primes of $K$ to be primes of cyclic reduction for $E/K$,
and also a proof that for arbitrary number fields $K$,
the density of 
cyclic reduction is given as in \eqref{deltaEK} by the 
inclusion-exclusion sum
\begin{equation}
\label{deltasum}
\delta_{E/K}=\sum_{m=1}^{\infty} \frac{\mu(m)}{[K_m:K]}.
\end{equation}
We define our elliptic curve $E/K$ by an integral Weierstrass equation
$y^2=x^3+Ax+B$ with coefficients $A, B$ in the ring of integers $\OO_K$
of the number field $K$.
To this model of $E$ we associate the discriminant
\begin{equation}
\label{Delta_E}
\Delta_E= -16(4A^3+27B^2) \in\OO_K\setminus\{0\}.
\end{equation}
The primes $\gothp$ of $K$ coprime to $\Delta_E$, which are
necessarily of odd characteristic,
are the primes \emph{of good reduction} for this model of~$E$.
Changing the model, or providing a minimal model,
will only affect the reduction type of finitely many primes,
and this is irrelevant for our density questions.
For primes $\gothp$ of good reduction for $E/K$,
the reduced curve modulo~$\gothp$ is an elliptic curve
$E_\gothp$ over the residue class field~$k_\gothp$.
We begin by formally stating the key criterion for a prime of good
reduction of $E/K$ to be a \emph{prime of cyclic reduction} of $E/K$, i.e.,
a prime for which the finite group $E_\gothp(k_\gothp)$ is cyclic.

\begin{lem}
\label{cyclicreductioncriterion}
For a prime $\gothp$ of good reduction for the elliptic curve $E/K$,
the following are equivalent:
\begin{enumerate}
\item $\gothp$ is a prime of cyclic reduction for $E/K$;
\item for no prime number $\ell$ coprime to $\gothp$,
the prime $\gothp$ splits completely in $K\subset K_\ell$,
with $K_\ell$ the $\ell$-division field of~$E$.
\end{enumerate}
\end{lem}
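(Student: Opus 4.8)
The plan is to translate conditions (1) and (2) into the same statement about the $\ell$-torsion subgroups of the reduced point group $E_\gothp(k_\gothp)$, and then compare them one prime at a time. Throughout, write $p=\car k_\gothp$ for the residue characteristic of $\gothp$; since $\ell$ ranges over prime numbers, ``$\ell$ coprime to $\gothp$'' in (2) simply means $\ell\neq p$. The first step is to reformulate (1) group-theoretically: $E_\gothp(k_\gothp)$ is finite abelian and generated by at most two elements \cite{Silverman2}*{III, 6.4}, so it is cyclic if and only if for every prime $\ell$ the $\F_\ell$-space $E_\gothp[\ell](k_\gothp)$ has dimension at most $1$, that is, $E_\gothp[\ell](k_\gothp)\neq(\Z/\ell\Z)^2$. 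The residue characteristic never obstructs cyclicity: over $\kbar_\gothp$ one has $E_\gothp[p](\kbar_\gothp)\cong\Z/p\Z$ or $0$ according as $E_\gothp$ is ordinary or supersingular, so $E_\gothp[p](k_\gothp)$ is automatically cyclic. Hence (1) is equivalent to the assertion that $E_\gothp[\ell](k_\gothp)\neq(\Z/\ell\Z)^2$ for every prime $\ell\neq p$.

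Next I would analyse the splitting of $\gothp$ in $K_\ell$ for a fixed prime $\ell\neq p$. Because $E$ has good reduction at $\gothp$, reduction modulo $\gothp$ gives a Galois-equivariant injection $E[\ell](\Kbar)\hookrightarrow E_\gothp(\kbar_\gothp)$ by injectivity of reduction on prime-to-$p$ torsion \cite{Silverman2}*{VII, 3.1}, and for order reasons this is an isomorphism onto $E_\gothp[\ell](\kbar_\gothp)$. In particular the inertia group at $\gothp$ acts trivially on $E[\ell]$, so $K_\ell/K$ is unramified at $\gothp$ and its decomposition group at $\gothp$ is generated by $\Frob_\gothp$, which under the reduction isomorphism corresponds to the Frobenius of $k_\gothp$ acting on $E_\gothp[\ell](\kbar_\gothp)$. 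Consequently $\gothp$ splits completely in $K\subseteq K_\ell$ if and only if $\Frob_\gothp$ acts trivially on $E[\ell]$, if and only if the Frobenius of $k_\gothp$ fixes $E_\gothp[\ell](\kbar_\gothp)$ pointwise, if and only if $E_\gothp[\ell](k_\gothp)=(\Z/\ell\Z)^2$.

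Combining the two reformulations gives the equivalence: $\gothp$ is a prime of cyclic reduction $\iff$ $E_\gothp[\ell](k_\gothp)\neq(\Z/\ell\Z)^2$ for all primes $\ell\neq p$ $\iff$ $\gothp$ fails to split completely in $K_\ell$ for all primes $\ell\neq p$, which is precisely condition (2). This lemma is essentially an unwinding of definitions, and the only step demanding genuine care — as opposed to routine manipulation of elementary divisors — is the handling of the residue characteristic $p$: it must be discarded both on the cyclicity side, via the structure of $E_\gothp[p]$, and on the splitting side, where the reduction isomorphism, and hence the unramifiedness of $K_\ell/K$ at $\gothp$, is available only for $\ell\neq p$.
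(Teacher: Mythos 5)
Your proof is correct and follows essentially the same route as the paper's: both reduce the lemma to the statement that, for $\ell\neq\car k_\gothp$, the prime $\gothp$ splits completely in $K_\ell$ if and only if $E_\gothp[\ell](k_\gothp)$ has order $\ell^2$, using injectivity of reduction on prime-to-$p$ torsion and discarding the residue characteristic via the cyclicity of $E_\gothp[p]$. The only cosmetic difference is that you justify the splitting criterion via Frobenius acting on $E[\ell]$ through the reduction isomorphism, whereas the paper argues directly that $k_\gothq$ is generated over $k_\gothp$ by the coordinates of the reduced torsion points; these are the same argument in different clothing.
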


\begin{proof}
Pick $\gothp\nmid\Delta_E$.
Then $E_\gothp(k_\gothp)$ is a cyclic group if and only if for no
prime $\ell$, its $\ell$-torsion subgroup $E_\gothp[\ell](k_\gothp)$
has order $\ell^2$.
For $\ell=\car(k_\gothp)$, it is a generality on elliptic curves
in positive characteristic that the group $E_\gothp[\ell](k_\gothp)$
is cyclic, so we can further assume $\gothp\nmid\ell$.
Then $\gothp$ is unramified in the Galois extension $K\subset K_\ell$,
as it is a prime of good reduction of $E$ coprime to $\ell$.

The group $E[\ell](K_\ell)$ has order $\ell^2$ by definition of $K_\ell$,
and at every prime $\gothq|\gothp$ of $K_\ell$, the natural reduction map
$E[\ell](K_\ell)\to E_\gothq(k_\gothq)$ is injective as
$\gothq\nmid \ell\Delta_E$ is a prime of good reduction of $E$ in $K_\ell$.
Thus $E_\gothq[\ell](k_\gothq)$ has order $\ell^2$.
Now $k_\gothq$ is generated over $k_\gothp$ by the coordinates of the
points in $E_\gothq[\ell](k_\gothq)$, as $K\subset K_\ell$ is generated by the
coordinates of the $\ell$-torsion points of $E$.
It follows that the natural inclusion $k_\gothp\subset k_\gothq$ is an
equality for all $\gothq|\gothp$ in $K_\ell$, i.e.,
$\gothp$ splits completely in $K\subset K_\ell$,
if and only if the natural inclusion
$E_\gothp[\ell](k_\gothp)\subset E_\gothq[\ell](k_\gothq)$
is an equality.
As $E_\gothq[\ell](k_\gothq)$ has order $\ell^2$, this proves the lemma.
\end{proof}

\noindent
If $\gothp$ is a prime of good reduction of $E$ of characteristic
$p$ coprime to the discriminant $\Delta_K$ of $K$,
then $\gothp$ can not split completely
in the division field $K_p$, as it is totally ramified in the subextension
$K\subset K(\zeta_p)$ of degree $p-1>1$ of $K \subset K_p$ that is generated
by a primitive $p$-th root of unity $\zeta_p$.
This shows that, for primes $\gothp$ coprime to both $\Delta_E$
and $\Delta_K$, being in the set $S_{E/K}$ of primes of cyclic
reduction of $E$ is tantamount to \emph{not} splitting completely in \emph{any} division field extension $K\subset K_\ell$ at a rational prime $\ell$.

\begin{cor}
\label{S_EKdescription}
For a prime $\gothp\nmid \Delta_E\Delta_K$, we have $\gothp\in S_{E/K}$
if and only if $\gothp$ does not split completely in any of the
division fields $K_\ell$, with $\ell\in\Z$ prime.
\qed
\end{cor}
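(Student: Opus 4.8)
The plan is to deduce the corollary directly from Lemma~\ref{cyclicreductioncriterion} by disposing of the one prime number that the lemma leaves aside. Lemma~\ref{cyclicreductioncriterion} already tells us that, for a prime $\gothp$ of good reduction of $E$, we have $\gothp\in S_{E/K}$ if and only if $\gothp$ does not split completely in $K_\ell$ for any prime number $\ell$ \emph{coprime to $\gothp$}. The only prime number that is not coprime to $\gothp$ is the residue characteristic $p=\car(k_\gothp)$, so to upgrade Lemma~\ref{cyclicreductioncriterion} to the statement of the corollary it suffices to show that a prime $\gothp\nmid\Delta_E\Delta_K$ automatically fails to split completely in $K_p$ as well. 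Once this is known, the conditions ``$\gothp$ splits completely in no $K_\ell$ with $\ell\ne p$ prime'' and ``$\gothp$ splits completely in no $K_\ell$ with $\ell$ prime'' say the same thing, which is exactly the claimed equivalence.

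For the missing prime $p$ I would argue as in the paragraph preceding the corollary. Since $16\mid\Delta_E$ and $\gothp\nmid\Delta_E$, the residue characteristic satisfies $p\ge 3$. The Weil pairing on $E[p]$ forces $\zeta_p\in K_p$, so $K\subset K(\zeta_p)\subseteq K_p$, and $[K(\zeta_p):K]$ is a divisor of $p-1$ which is in fact equal to $p-1$ as soon as $\gothp\nmid\Delta_K$: indeed $p$ is then unramified in $K/\Q$, while it is totally ramified of degree $p-1$ in $\Q\subset\Q(\zeta_p)$, so $\gothp$ is totally ramified of degree $p-1\ge 2$ in $K\subset K(\zeta_p)$. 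In particular $\gothp$ is ramified in $K\subset K(\zeta_p)\subseteq K_p$, and hence does not split completely in $K_p$.

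I do not expect a genuine obstacle here: the content lies entirely in Lemma~\ref{cyclicreductioncriterion}, and the corollary is the clean reformulation obtained after noting that the ``exceptional'' division field $K_p$ is harmless once the finitely many primes dividing $\Delta_E\Delta_K$ are excluded. The only points requiring a little care are the bookkeeping that $p$ is the unique prime number not coprime to $\gothp$, and the use of $\gothp\nmid\Delta_E$ to guarantee $p\ne 2$, so that $K(\zeta_p)\ne K$ and the ramification argument has teeth.
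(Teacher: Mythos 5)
Your proposal is correct and follows essentially the same route as the paper: the paper likewise derives the corollary from Lemma~\ref{cyclicreductioncriterion} by observing (in the paragraph preceding the statement) that for $\gothp\nmid\Delta_E\Delta_K$ the prime $\gothp$ is totally ramified in $K\subset K(\zeta_p)\subseteq K_p$ of degree $p-1>1$, hence cannot split completely in $K_p$. Your extra bookkeeping about $p\ne 2$ via $16\mid\Delta_E$ and about $p$ being unramified in $K/\Q$ is exactly the justification the paper leaves implicit.
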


\noindent
The proof of Lemma \ref{cyclicreductioncriterion} shows that if a prime
$\gothp\nmid \Delta_E\Delta_K$ splits completely in $K\subset K_\ell$, then
$E_\gothp(k_\gothp)$ has complete $\ell$-torsion, so we have
$ \ell\le \sqrt {N_{K/\Q}(\gothp)}+1 $
by the Hasse-Weil bound.
For a squarefree integer $m$ and a prime $\gothp \nmid \Delta_E$,
we similarly obtain
\begin{equation}
\label{HWbound}
\text{$\gothp\nmid \Delta_E\Delta_K$ splits completely in $K\subset K_m$}
\ \Longrightarrow \ 
m\le \sqrt {N_{K/\Q}(\gothp)}+1 .
\end{equation}

\noindent
In order to count the cardinality $\# S_{E/K}(x)$ of primes $\gothp$ in
the set $S_{E/K}$ of cyclic reduction of norm
$N_{K/\Q}(\gothp)\le x\in\R_{>0}$,
we introduce the counting function
$$
\pi_K(x, K_m)=\#\{\gothp\nmid\Delta_E\Delta_K:\
    N_{K/\Q}(\gothp)\le x \text{\ and\ }
    \gothp \text{\ splits completely in\ }K\subset K_m\}.
$$
The function $\pi_K(x, K)$ counts primes $\gothp\nmid\Delta_K$
of good reduction of $E$ of norm at most~$x$, and,
as there are only finitely many primes $\gothp|\Delta_K$ in $S_{E/K}$,
Corollary \ref{S_EKdescription} and inclusion-exclusion yield
\begin{equation}
\label{cyclicprimes<x}
\# S_{E/K}(x)=
\sum_{m=1}^\infty \mu(m) \pi_K(x, K_m) + {\mathrm O}(1).
\end{equation}
Note that by \eqref{HWbound}, the function $\pi_K(x, K_m)$ vanishes for
$m>\sqrt x+1$, so the infinite sum of integers in \eqref{cyclicprimes<x}
is actually finite, and therefore convergent.

In order to obtain the desired asymptotic relation
$\# S_{E/K}(x)\sim \delta_{E/K} \cdot x/\log x$ 
we use the asymptotic relations
$\pi_K(x, K_m)\sim \frac{1}{[K_m:K]} \cdot x/\log x$.
After dividing both sides in \eqref{cyclicprimes<x} by $x/\log x$,
the problem in obtaining \eqref{deltasum} lies in interchanging the infinite sum and the limit
$x\to\infty$ in the right hand side of \eqref{cyclicprimes<x}.
This can be done if one assumes GRH in order to bound the error terms 
in the asymptotic relations for $\pi_K(x, K_m)$ and adapts the argument
of Hooley found in~\cite{Hooley}.
More precisely, Murty \cite{Murty1}*{Theorem 1} has shown that in this setting,
for the density of $S_{E/K}$ to be equal (under GRH) to the 
the inclusion-exclusion density \eqref{deltaEK} it is enough to show that
$[K_m:K]$ grows sufficiently rapidly with $m$ (as it does for division fields)
and that \emph{two} conditions are satisfied.
The first condition is that the root discriminants of the division fields
$K_m$ do not grow too rapidly with $m$, as follows.

\begin{prop}
\label{rootdiscriminantKm}
For $m\in \Z_{>0}$ tending to infinity, we have
$$\frac{1}{[K_m:K]}\log|\Delta_{K_m}|=\mathrm{O}(\log m)$$
\end{prop}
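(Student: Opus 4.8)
The plan is to pass from the absolute discriminant of $K_m$ to the relative different $\mathfrak{D}_{K_m/K}$ and to bound the latter prime by prime, using only the crude local estimate for the different exponent together with the inclusion $\Gal(K_m/K)\hookrightarrow\GL_2(\Z/m\Z)$.

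First I would apply the tower formula $|\Delta_{K_m}|=|\Delta_K|^{[K_m:K]}\cdot N_{K/\Q}(\mathfrak{d}_{K_m/K})$ for the discriminants in $\Q\subseteq K\subseteq K_m$; after division by $[K_m:K]$ the term $\log|\Delta_K|$ is a fixed constant, so it suffices to show $\frac1{[K_m:K]}\log N_{K/\Q}(\mathfrak{d}_{K_m/K})=\mathrm{O}(\log m)$. As $K\subseteq K_m$ is Galois, write $e_\mathfrak{p},f_\mathfrak{p},g_\mathfrak{p}$ for the ramification index, residue degree and number of primes of $K_m$ above a prime $\mathfrak{p}$ of $K$, and $d_\mathfrak{p}=v_\mathfrak{P}(\mathfrak{D}_{K_m/K})$ for the different exponent at a prime $\mathfrak{P}\mid\mathfrak{p}$. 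Then $v_\mathfrak{p}(\mathfrak{d}_{K_m/K})=g_\mathfrak{p}f_\mathfrak{p}d_\mathfrak{p}$ while $e_\mathfrak{p}f_\mathfrak{p}g_\mathfrak{p}=[K_m:K]$, so
\[
\frac1{[K_m:K]}\log N_{K/\Q}(\mathfrak{d}_{K_m/K})
=\sum_{\mathfrak{p}}\frac{d_\mathfrak{p}}{e_\mathfrak{p}}\,\log N_{K/\Q}(\mathfrak{p}),
\]
the sum running over the finitely many primes $\mathfrak{p}$ ramified in $K_m$. By the criterion of N\'eron--Ogg--Shafarevich these all divide $m\Delta_E$.

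Next I would bound each term. For $\mathfrak{p}\mid\ell$ the classical inequality $d_\mathfrak{p}\le e_\mathfrak{p}-1+v_\mathfrak{P}(e_\mathfrak{p})$ (Serre, \emph{Local Fields}, Ch.~III, \S6) gives $\frac{d_\mathfrak{p}}{e_\mathfrak{p}}<1+\frac{v_\mathfrak{P}(e_\mathfrak{p})}{e_\mathfrak{p}}=1+e(\mathfrak{p}/\ell)\,v_\ell(e_\mathfrak{p})\le 1+[K:\Q]\,v_\ell(e_\mathfrak{p})$, and as $e_\mathfrak{p}$ divides $[K_m:K]$ and hence $\#\GL_2(\Z/m\Z)$, we get $v_\ell(e_\mathfrak{p})\le v_\ell(\#\GL_2(\Z/m\Z))$. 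Since also $\sum_{\mathfrak{p}\mid\ell}\log N_{K/\Q}(\mathfrak{p})\le[K:\Q]\log\ell$, summing yields
\[
\sum_{\mathfrak{p}}\frac{d_\mathfrak{p}}{e_\mathfrak{p}}\log N_{K/\Q}(\mathfrak{p})
\le[K:\Q]\sum_{\ell\mid m\Delta_E}\bigl(1+[K:\Q]\,v_\ell(\#\GL_2(\Z/m\Z))\bigr)\log\ell .
\]
The finitely many primes $\ell\mid\Delta_E$ with $\ell\nmid m$ contribute $\mathrm{O}(\log m)$ here, since $\ell^{v_\ell(\#\GL_2(\Z/m\Z))}\le\#\GL_2(\Z/m\Z)<m^4$ forces $v_\ell(\#\GL_2(\Z/m\Z))<4\log m/\log\ell$; and for the primes $\ell\mid m$ I would combine $\sum_{\ell\mid m}\log\ell\le\log m$ with the estimate
\[
\sum_{\ell\mid m}v_\ell(\#\GL_2(\Z/m\Z))\,\log\ell\le\log\#\GL_2(\Z/m\Z)<4\log m ,
\]
which holds because $\prod_{\ell\mid m}\ell^{v_\ell(\#\GL_2(\Z/m\Z))}$ divides $\#\GL_2(\Z/m\Z)<m^4$. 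Together these give $\frac1{[K_m:K]}\log N_{K/\Q}(\mathfrak{d}_{K_m/K})=\mathrm{O}(\log m)$, and with the tower formula the proposition follows.

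The one step that is more than bookkeeping is the last estimate. A term-by-term bound would be hopeless: for a prime $\ell$ such that many prime divisors $p$ of $m$ satisfy $p\equiv\pm1\pmod\ell$, the $\ell$-part of $\#\GL_2(\Z/m\Z)$ — and with it the a priori bound on the wildness of $K_m/K$ above $\ell$ — is much larger than $v_\ell(m)$. Only after summing over all $\ell\mid m$ does this contribution collapse, precisely because the relevant prime powers multiply up to a divisor of $\#\GL_2(\Z/m\Z)$, which is polynomially bounded in $m$. Everything else is routine manipulation of differents and of discriminants in a tower.
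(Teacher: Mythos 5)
Your proof is correct, and it reaches the estimate by a more self-contained route than the paper. The paper's proof simply quotes a Hensel-type bound for Galois extensions $K\subset L$ (from Murty--Murty, p.~44), namely $\log D(L/K)\le([L:\Q]-[K:\Q])\sum_{p\mid D(L/K)}\log p+[L:\Q]\log[L:K]$ with $D(L/K)=N_{K/\Q}(\Delta_{L/K})$, combines it with $|\Delta_{K_m}|=D(K_m/K)\,|\Delta_K|^{[K_m:K]}$, and then invokes exactly the two facts you also use: the primes dividing $D(K_m/K)$ divide $m$ or lie below primes of bad reduction, and $[K_m:K]=\mathrm{O}(m^4)$. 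Your argument in effect reproves the quoted inequality in the case at hand: you unwind the relative discriminant into local different exponents, bound each by the classical estimate $d_\mathfrak{p}\le e_\mathfrak{p}-1+v_{\mathfrak{P}}(e_\mathfrak{p})$, and absorb the wild contributions globally via the observation that $\prod_{\ell\mid m}\ell^{v_\ell(\#\GL_2(\Z/m\Z))}$ divides $\#\GL_2(\Z/m\Z)<m^4$; that last step plays precisely the role of the $[L:\Q]\log[L:K]$ term (i.e.\ $\log n(m)=\mathrm{O}(\log m)$) in the paper, and your closing remark correctly identifies it as the one non-routine point. What your version buys is independence from the cited inequality (only the standard different bound from Serre's \emph{Local Fields}, the N\'eron--Ogg--Shafarevich criterion, and the embedding $\Gal(K_m/K)\hookrightarrow\GL_2(\Z/m\Z)$ are needed); what the paper's buys is brevity. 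Two cosmetic points: the sum over ``$\ell\mid m\Delta_E$'' should be read as $\ell\mid m$ or $\ell$ lying below a prime of bad reduction, since $\Delta_E\in\OO_K$ rather than $\Z$; and the implied constants depend on $E$ and $K$, exactly as in the paper.
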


\noindent
Note that the quantity in the Proposition is $[K:\Q]$ times the
logarithm of the ordinary root discriminant $|\Delta_{K_m}|^{1/[K_m:\Q]}$.

The second condition is that `not too many' primes $\gothp$ of $K$
split in the division fields~$K_\ell$ for `large' primes $\ell$,
in the following sense.
\begin{prop}
\label{largeell}
The number of primes $\gothp$ of $K$ of
norm $N_{K/\Q}(\gothp)\le x$ that split completely in
$K\subset K_\ell$ for some prime $\ell>\frac{x^{1/2}}{\log^2 x}$
is $\mathrm{o}(\frac{x}{\log x})$ for $x\to \infty$.
\end{prop}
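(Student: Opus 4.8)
The plan is to bound the number of such primes by counting, for each large prime $\ell$, the primes $\gothp$ of norm at most $x$ that split completely in $K\subset K_\ell$, and then summing over $\ell$. For a single prime $\ell$, if $\gothp\nmid\Delta_E\Delta_K$ splits completely in $K_\ell$, then by the proof of Lemma~\ref{cyclicreductioncriterion} the reduced curve $E_\gothp(k_\gothp)$ has full $\ell$-torsion, so $\ell^2$ divides $\#E_\gothp(k_\gothp)$. By the Hasse--Weil bound, $\#E_\gothp(k_\gothp)\le N_{K/\Q}(\gothp)+2\sqrt{N_{K/\Q}(\gothp)}+1 = (\sqrt{N_{K/\Q}(\gothp)}+1)^2 \le (\sqrt x+1)^2$, so the number of such $\gothp$ with $N_{K/\Q}(\gothp)\le x$ is at most the number of integers in $[1,(\sqrt x+1)^2]$ divisible by $\ell^2$, which is $\mathrm{O}(x/\ell^2)$, times the bounded number $[K:\Q]$ of primes of $K$ above each rational prime.

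Summing this over the primes $\ell>x^{1/2}/\log^2 x$ gives a bound of order
$$
x\sum_{\ell>x^{1/2}/\log^2 x}\frac{1}{\ell^2}
\;=\;\mathrm{O}\!\left(x\cdot\frac{\log^2 x}{x^{1/2}}\right)
\;=\;\mathrm{O}\!\left(x^{1/2}\log^2 x\right),
$$
using that $\sum_{n>T}n^{-2}=\mathrm{O}(1/T)$. Since $x^{1/2}\log^2 x = \mathrm{o}(x/\log x)$, this is the desired estimate. One must separately note that the finitely many primes $\gothp\mid\Delta_E\Delta_K$ contribute $\mathrm{O}(1)$ and do not affect the bound.

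The argument is essentially elementary once one has the $\ell^2\mid\#E_\gothp(k_\gothp)$ divisibility, which is already contained in the proof of Lemma~\ref{cyclicreductioncriterion}. The only point requiring a little care is that we are counting \emph{primes of $K$} rather than rational primes: a prime $\gothp$ of $K$ of norm $N_{K/\Q}(\gothp)\le x$ lies above a rational prime $p$ with $p\le x$, and since $\#E_\gothp(k_\gothp)$ is a quantity attached to $\gothp$ (not to $p$), one applies the Hasse bound with respect to $\#k_\gothp=N_{K/\Q}(\gothp)$ directly; the factor $[K:\Q]$ accounts for the at most $[K:\Q]$ primes of $K$ above each rational prime. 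The main (minor) obstacle is just bookkeeping to ensure the constants are uniform in $\ell$, but no genuine difficulty arises since the Hasse bound is uniform.
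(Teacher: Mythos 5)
There is a genuine gap at the central counting step. You correctly observe that if $\gothp\nmid\Delta_E\Delta_K$ splits completely in $K_\ell$ then $\ell^2\mid\#E_\gothp(k_\gothp)$ and $\#E_\gothp(k_\gothp)\le(\sqrt x+1)^2$. But from this you conclude that the number of such $\gothp$ is at most ($[K:\Q]$ times) the number of integers in $[1,(\sqrt x+1)^2]$ divisible by $\ell^2$. That deduction silently assumes the map $\gothp\mapsto\#E_\gothp(k_\gothp)$ is (boundedly) injective, which it is not: for a fixed admissible order $n=\ell^2 m$, every prime $\gothp$ with $N_{K/\Q}(\gothp)$ in the Hasse interval $|N_{K/\Q}(\gothp)+1-n|\le 2\sqrt{N_{K/\Q}(\gothp)}$ — an interval of length about $4\sqrt n$ — could in principle realize $\#E_\gothp(k_\gothp)=n$, and nothing in your argument excludes this. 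Repairing the count this way only yields $\mathrm{O}(x/\ell^2)\cdot\mathrm{O}(\sqrt x)$ primes per $\ell$, which after summing over $\ell$ is far too weak. The implausibly strong final bound $\mathrm{O}(x^{1/2}\log^2 x)$ is a symptom: an upper bound of that quality for primes with $\ell^2\mid\#E_\gothp(k_\gothp)$ is not available by such soft means, and indeed obtaining $\mathrm{O}(x/\ell^2)$ for a single $\ell$ of size $\approx x^{1/2}$ is essentially a Chebotarev-type statement that one cannot extract from the Hasse bound alone.

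The paper's proof avoids this entirely by exploiting a different consequence of complete splitting: since $K(\zeta_\ell)\subseteq K_\ell$ (Weil pairing), a degree-one prime $\gothp$ of norm $p$ that splits completely in $K_\ell$ satisfies $p\equiv 1\bmod\ell$. One then bounds, for each $\ell$ in the range $(x^{1/2}/\log^2x,\,x^{1/2}+1)$, the count $\pi(x;1,\ell)\ll x/(\ell\log(x/\ell))$ by Brun--Titchmarsh, and uses Mertens' estimate to show that $\sum 1/\ell$ over this narrow range of $\ell$ tends to $0$, giving the required $\mathrm{o}(x/\log x)$. (Primes of degree $\ge2$ and primes dividing $\Delta_E\Delta_K$ are discarded first, as you do.) If you want to salvage your approach, you would need to replace the ``count the possible group orders'' step by a count of the primes themselves in an arithmetic progression or a Chebotarev class, which is exactly what the Brun--Titchmarsh route accomplishes.
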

\noindent
Thus, in order to establish that, under GRH, the quantity
$\delta_{E/K}$ in \eqref{deltaEK} is in all cases
the density of the set $S_{E/K}$ of primes of cyclic reduction,
it now suffices to prove Propositions \ref{rootdiscriminantKm}
and \ref{largeell}.
For completeness, we sketch the proofs in the remainder of this Section.
\begin{proof}[Proof of Proposition \ref{rootdiscriminantKm}]
Bounding absolute root discriminants already dates back to Hensel
\cite{Serre2}*{p. 58}.
For the relative extension $K\subset K_m$ we can use
the version found in \cite{Murty2}*{p.\ 44}.
It states that for a finite Galois extension of number fields
$K\subset L$ with relative discriminant $\Delta_{L/K}$
of norm $D(L/K)=N_{K/\Q}(\Delta_{L/K})\in\Z_{>0}$,
we have
\begin{equation}
\label{relativeHensel}
\log D(L/K) \leq ([L:\Q]-[K:\Q]) \sum_{p| D(L/K)} \log p + [L:\Q] \log([L:K]).
\end{equation}
As the absolute discriminant of $L$ equals
$|\Delta_L|=D(L/K)|\Delta_K|^{[L:K]}$,
the identity
$$
\log D(L/K)=\log|\Delta_L|-[L:K]\log|\Delta_K|
$$
can be combined with the inequality \eqref{relativeHensel} in the case
$L=K_m$ to obtain, after division by $d_m=[K_m:K]$, the estimate

\begin{align*}
\frac{1}{d_m}\log |\Delta_{K_m}| -  \log |\Delta_K|
        &\leq  \frac{[K_m: \Q]-[K:\Q]}{d_m}
                \sum_{p|D(K_m/K)}~\log p  + \frac{ [K_m:\Q] }{d_m}\log d_m \\
        & \leq [K:\Q] \cdot \bigl(\sum_{p|D(K_m/K)} \log p + \log d_m \bigr).
\end{align*}
The primes $p|D(K_m/K)$ either divide $m$, or they lie under one of
the finitely many primes of bad reduction of $E$, so we have
$\sum_{p|D(K_m/K)} \log p\le C_E + \log m$ for some constant $C_E$
depending only on $E$.
We obtain
$$
\frac{1}{d_m} \log|\Delta_{K_m}| \le
        [K:\Q] \cdot \bigl(\log |\Delta_K| + C_E + \log m + \log d_m\bigr).
$$
As we have $d_m=\text{O}(m^4)$, this yields the desired asymptotic relation.
\end{proof}

\begin{proof}[Proof of Proposition \ref{largeell}]
When showing that the cardinality of the set of primes $\gothp$ of~$K$ of
norm $N_{K/\Q}(\gothp)\le x$ that split completely in $K\subset K_\ell$
for some prime $\ell>\frac{x^{1/2}}{\log^2 x}$ is asymptotically
$\mathrm{o}(\frac{x}{\log x})$, we may disregard primes
$\gothp|\Delta_K\Delta_E$, as they are finite in number,
and primes $\gothp$ that are not of degree 1,
as there are no more than $\mathrm{o}(\sqrt x)$ of them.

Suppose now that $\gothp\nmid\Delta_K\Delta_E$ is of
prime norm $N_{K/\Q}(\gothp)=p\le x$, and that $\gothp$
splits completely in an $\ell$-division field $K_\ell$ with $\ell>2$.
By \eqref{HWbound}, this implies $\ell\le \sqrt x +1$.
As $\gothp\nmid\Delta_K$ necessarily splits completely in
the subextension $K\subset K(\zeta_\ell)$, we have $\gothp\nmid\ell$,
and $\gothp$ lies over a rational prime $p\equiv 1\bmod \ell$.
Any such $p$ gives rise to at most $[K:\Q]$ primes $\gothp$ in $K$ of
norm $p$.
Thus, the number $B(x)$ of such $\gothp$ can be bounded by
\begin{equation}
\label{p1modell}
B(x)\le
[K:\Q]
\cdot \sum_{\frac{x^{1/2}}{\log^2 x} < \ell < x^{1/2}+1}
\pi(x, 1, \ell),
\end{equation}
with $\pi(x, 1, \ell)$ denoting the number of primes $p\le x$
satisfying $p\equiv 1\bmod \ell$.
By the Brun-Titchmarsh theorem, we have
$$
\pi(x,1,\ell) \leq
        \frac{2x}{\varphi(\ell) \log (\frac{x}{\ell})} \ll
        \frac{x}{\ell \log (\frac{x}{\ell})},
$$
so we obtain
$$
B(x)\ll \sum_{\frac{x^{1/2}}{\log^2 x} < \ell < x^{1/2}+1}
\frac{x}{\ell \log (\frac{x}{\ell})}\ll
\frac{x}{\log(x)} \cdot \sum_{\frac{x^{1/2}}{\log^2 x} < \ell < x^{1/2}+1}
                \frac{1}{\ell}.
$$
Applying the well known estimate \cite{Apostol}*{Theorem 4.12} for the sum
$\sum_{\ell<X} \ell^{-1}$ of reciprocals of primes, we obtain
$B(x)=\mathrm{o}(\frac{x}{\log x})$ for $x\to \infty$. 
%
\end{proof}

\section{Cyclic reduction density in the non-CM case}

\noindent
The explicit definition \eqref{deltaEK} of
the conjectural density $\delta_{E/K}$ of cyclic reduction
is ill-suited to determine either its approximate value or its vanishing.
Theorem \ref{factorizationofdeltaEK} allows us to approximate
it with high precision from fewer data, and to determine
whether it vanishes.
Its proof amounts to determining the entanglement of the
division fields $K_\ell$, which, by the open image theorem of Serre
\cite{Serre}, is `of finite nature' for $E$ without CM.
\begin{defn}
\label{def:linearlydisjoint}
Let $F$ be a field and let $\mathcal{F}=\{F_n\}_{n\in X}$
a family of Galois extensions of $F$ inside an algebraic closure of $F$.
We call $\mathcal{F}$ linearly disjoint over $F$ if for the
compositum $L$ of the fields $F_n$,
the natural inclusion map
$$\varphi: \Gal(L/F) \hookrightarrow \prod_{n\in X} \Gal(F_n/F)$$
is an isomorphism.
If this is not the case, we call the family $\mathcal{F}$
entangled over $K$.
The entanglement is said to be finite if $\varphi[\Gal(L/F)]$ is of finite index in $\prod_{n\in X} \Gal(F_n/F)$.
\end{defn}

\noindent
A family as in Definition \ref{def:linearlydisjoint}, which can be either finite
or infinite, depending on $X$,
is linearly disjoint over $F$ if and only if for every individual
field $F_n\in \mathcal{F}$,
the field $F_n$ and the compositum of the fields $F_m$ with $m\ne n$
are linearly disjoint over $F$.

The family of $\ell$-division fields $K_\ell$ with $\ell$ prime is
not necessarily linearly disjoint over a number field $K$
for an elliptic curve $E/K$ without CM,
but we can make it linearly disjoint by grouping together
finitely many `critical' $K_\ell$ in their compositum.
This crucial fact was already formulated in the Introduction,
and we repeat it here.
\begin{thm} \label{teorema cruciale}
Let $K$ be a number field, $E/K$ an elliptic curve without CM,
and define the set $T_{E/K}$ of critical primes for $E/K$ as
consisting of the prime numbers $\ell$ satisfying at least one of
the following conditions:
\begin{enumerate}
\item $\ell \mid 2\cdot 3\cdot 5 \cdot\Delta_K$;
\item $\ell$ lies below a prime of bad reduction of $E$;
\item the Galois group $\Gal(K_\ell/K)$ is not isomorphic to $\GL_2(\F_\ell)$.
\end{enumerate}
If $N\in\Zee_{>0}$ is divisible by all primes in $T_{E/K}$,
then the family consisting of $K_N$ and $\{K_\ell \}_{\ell \nmid N}$
is linearly disjoint over $K$.
\end{thm}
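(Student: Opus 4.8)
The plan is to reduce the statement to a purely group-theoretic fact about the image of Galois in $\prod_\ell \GL_2(\F_\ell)$, using that the division fields $K_\ell$ for $\ell \nmid N$ already have the full Galois group $\GL_2(\F_\ell)$ by condition (iii). Write $G_\infty = \Gal(K_{\mathrm{rad}}/K)$, where $K_{\mathrm{rad}}$ is the compositum of all $K_\ell$; by the open image theorem of Serre, $G_\infty$ is an open (hence finite-index) subgroup of $\prod_\ell \GL_2(\Z_\ell)$, and modulo $\ell$ it is all of $\GL_2(\F_\ell)$ for $\ell \nmid N$. I first want to show the \emph{mod-$N$} reduction behaves well: the natural map $\Gal(K_{\mathrm{rad}}/K) \to \Gal(K_N/K) \times \prod_{\ell \nmid N}\GL_2(\F_\ell)$ is injective by construction (the fields $K_N$ and the $K_\ell$, $\ell\nmid N$, together generate $K_{\mathrm{rad}}$), so the real content is \emph{surjectivity}, i.e.\ that the image is the full product.

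The key reduction step is this: since each $\GL_2(\F_\ell)$ for $\ell \nmid N$ is a quotient of the image, and distinct such $\ell$ are coprime, it suffices to show that any two distinct "pieces" are independent and that each piece is independent of the $K_N$-piece. For independence of $\GL_2(\F_{\ell_1})$ and $\GL_2(\F_{\ell_2})$ with $\ell_1 \neq \ell_2$ both $> 5$ and $\nmid N$, I would invoke Goursat's lemma: a proper subdirect product of $\GL_2(\F_{\ell_1}) \times \GL_2(\F_{\ell_2})$ would force a common nontrivial quotient, but the simple quotients of $\GL_2(\F_\ell)$ are $\PSL_2(\F_\ell)$ (and possibly a cyclic piece of order dividing $\ell-1$ from the determinant), and $\PSL_2(\F_{\ell_1}) \not\cong \PSL_2(\F_{\ell_2})$ for $\ell_1 \neq \ell_2$ with $\ell_i \ge 5$, while the abelian parts are handled because $K_{\ell_i} \cap K_{\ell_j}$ is abelian over $K$ and the cyclotomic entanglement $\Q(\zeta_{\ell_1}) \cap \Q(\zeta_{\ell_2}) = \Q$ is absorbed into the definition of $N$ (which is divisible by $\Delta_K$, controlling the abelian part over $K$). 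This is exactly where Lemma \ref{group lemma}, credited to Lombardo and Tronto in the acknowledgments, should do the heavy lifting: it presumably states that a subgroup of $\prod_{\ell\nmid N}\GL_2(\F_\ell)$ surjecting onto each factor and with "large enough" $N$ is the full product.

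To handle the $K_N$-piece: the extension $K_N$ is built from primes dividing $N$, and $K_{\mathrm{rad}}/K_N$ is unramified outside $N \cdot \Delta_E$; for $\ell \nmid N$ the extension $K_\ell/K$ is ramified at primes above $\ell$ (from the cyclotomic subfield $K(\zeta_\ell)$, nontrivial since $\ell \nmid \Delta_K$), so any subextension of $K_N \cap \prod_{\ell\nmid N} K_\ell$ is unramified at all primes above each such $\ell$ and also at primes dividing $N$ only through the $K_N$ side — the point is that $K_N$ is ramified only at primes dividing $N\Delta_E$, and the intersection with the product over $\ell\nmid N$ must be unramified there. One then argues the intersection is trivial by a ramification/discriminant comparison, using that $\gcd(N, \prod_{\ell\nmid N}\ell) = 1$.

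The main obstacle I expect is the abelian/cyclotomic entanglement: the determinant map $\GL_2(\F_\ell) \to \F_\ell^*$ corresponds to the cyclotomic field $K(\zeta_\ell)$, and over a general number field $K$ these cyclotomic fields need not be linearly disjoint from $K_N$ or from each other in the naive way — but including all primes dividing $\Delta_K$ in $N$ should precisely kill this, since then $K(\zeta_\ell)/K$ for $\ell \nmid N$ is as independent as $\Q(\zeta_\ell)/\Q$, which is genuinely disjoint across distinct primes. Pinning down that condition (i) with the factor $\Delta_K$ (together with $2\cdot3\cdot5$ to avoid the exceptional small-image cases of $\PSL_2$) is exactly sufficient, and no larger, is the delicate bookkeeping; the rest is Goursat plus the open image theorem plus a ramification argument.
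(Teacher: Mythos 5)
Your overall strategy --- reduce to group theory inside $\prod_\ell \GL_2(\F_\ell)$ and separate the factors at $\ell\nmid N$ using the simple groups $\PSL_2(\F_\ell)$ together with the cyclotomic/determinant part --- is the right one, and your Goursat argument for two primes $\ell_1,\ell_2\nmid N$ is sound in spirit. The genuine gap is in the one step that carries the whole theorem: showing that $K_\ell$ (for $\ell\nmid N$) is linearly disjoint from $K_N$. You propose to do this by a ramification/discriminant comparison, and that cannot work. Ramification only gives $K_N\cap K(\zeta_\ell)=K$: the extension $K\subset K(\zeta_\ell)$ is totally ramified above $\ell$ of degree $\ell-1>1$, while $K_N$ is unramified above $\ell$ by conditions (1) and (2). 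But $K_\ell$ has many nontrivial subextensions of $K$ that are unramified above $\ell$ (the inertia subgroups at primes over $\ell$ generate only a proper subgroup of $\GL_2(\F_\ell)$), and moreover both $K_N$ and $K_\ell$ may ramify at the primes of bad reduction of $E$, which all lie above divisors of $N$; so the intersection $K_N\cap K_\ell$ is not forced to be unramified at any useful set of primes, and even an everywhere-unramified intersection would not have to equal $K$.

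What closes the gap is the group-theoretic input you guessed at but misidentified. Lemma \ref{group lemma} is not a surjectivity criterion for subgroups of a product surjecting onto each factor; it states that any non-abelian simple Jordan--H\"older factor of a subgroup of $\GL_2(\Z/N\Z)$ is isomorphic to $A_5$ or to $\PSL_2(\F_{\ell'})$ with $\ell'\mid N$. Granting $K_N\cap K(\zeta_\ell)=K$ from the ramification step, one reduces to showing that $K_\ell$ and $K_N(\zeta_\ell)$ are linearly disjoint over $K(\zeta_\ell)$. Since $\Gal(K_\ell/K(\zeta_\ell))\cong\SL_2(\F_\ell)$ by (3), and for $\ell\ge 5$ the only proper nontrivial normal subgroup of $\SL_2(\F_\ell)$ is $\{\pm\id\}$, every nontrivial quotient has $\PSL_2(\F_\ell)$ as a composition factor; a nontrivial intersection would therefore force $\PSL_2(\F_\ell)$ to occur in $\Gal(K_N/K)\subset\GL_2(\Z/N\Z)$, contradicting the lemma because $\ell\nmid N$ and $\ell\ne 5$ (so that $\PSL_2(\F_\ell)\not\cong A_5$) --- which is exactly why the prime $5$ appears in condition (1). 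Your Goursat step for pairs $\ell_1,\ell_2\nmid N$ is then subsumed: apply the above with $N$ replaced by $N\ell_1$, which again satisfies (1)--(3).
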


\noindent
The proof of Theorem \ref{teorema cruciale} relies on a
group theoretical result on the Jordan-H\"older factors
that can occur in subgroups of $H\subset \GL_2(\Z/N\Z)$.

\begin{lem}\label{group lemma}
Let $N\in\Z_{>0}$ be an integer and $H\subset \GL_2(\Z/N\Z)$ a subgroup.
Suppose $S$ is a non-abelian simple group that occurs in $H$.
Then $S$ is isomorphic to either $A_5$ or $\PSL_2(\F_\ell)$,
with $\ell$ a prime dividing $N$.
\end{lem}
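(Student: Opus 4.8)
The plan is to reduce, via the Chinese Remainder Theorem, to the case of a prime power $N=\ell^k$, then analyze the structure of $\GL_2(\Z/\ell^k\Z)$ through the filtration by congruence subgroups. First I would write $\Z/N\Z\iso\prod_{\ell^k\| N}\Z/\ell^k\Z$, which gives $\GL_2(\Z/N\Z)\iso\prod_{\ell^k\|N}\GL_2(\Z/\ell^k\Z)$. Since any non-abelian simple group $S$ occurring in $H\subset\prod_\ell \GL_2(\Z/\ell^k\Z)$ must occur in one of the factors (a subquotient of a finite product has a composition series refining those of the projections, so each Jordan--H\"older factor appears in some factor; this is a standard consequence of the fact that a simple subquotient of $G_1\times G_2$ is a subquotient of $G_1$ or of $G_2$), it suffices to treat $H\subset\GL_2(\Z/\ell^k\Z)$ for a single prime power.

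Next I would use the reduction map $\pi\colon\GL_2(\Z/\ell^k\Z)\to\GL_2(\F_\ell)$, whose kernel is the congruence subgroup $\Gamma$, an $\ell$-group (it has order $\ell^{4(k-1)}$). For our subgroup $H$, intersecting and pushing forward along $\pi$ exhibits any non-abelian simple subquotient $S$ of $H$ as a subquotient of either $H\cap\Gamma$ or of $\pi(H)\subset\GL_2(\F_\ell)$. The former is impossible: $H\cap\Gamma$ is an $\ell$-group, hence solvable, so it has no non-abelian simple subquotient. Therefore $S$ occurs in $\GL_2(\F_\ell)$. Finally I would run through the subquotients of $\GL_2(\F_\ell)$: the center is central, the quotient $\PGL_2(\F_\ell)$ contains $\PSL_2(\F_\ell)$ with quotient of order $\gcd(2,\ell-1)\le 2$, and it is classical that the only non-abelian composition factor of $\GL_2(\F_\ell)$ is $\PSL_2(\F_\ell)$ --- \emph{except} for the small primes, where $\PSL_2(\F_2)\iso S_3$ and $\PSL_2(\F_3)\iso A_4$ are solvable, $\PSL_2(\F_5)\iso A_5$, and $\PSL_2(\F_4)\iso\PSL_2(\F_5)\iso A_5$. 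Keeping track of $\ell=2,3$ is exactly why $A_5$ must be allowed as an exceptional possibility in the statement (e.g.\ $A_5$ can occur inside $\GL_2(\F_9)$, or more relevantly inside $\GL_2(\Z/N\Z)$ for small $N$, without $N$ being divisible by $5$): when $\ell\in\{2,3\}$ the group $\GL_2(\F_\ell)$ is solvable, contributing nothing, but $A_5$ can still appear as a subquotient coming from $\GL_2$ of a prime power of a slightly larger prime only through $\PSL_2(\F_5)$ --- so in fact every non-abelian simple subquotient is $\PSL_2(\F_\ell)$ for some $\ell\mid N$ with $\ell\ge 5$, and since $\PSL_2(\F_5)\iso A_5$, the exceptional clause $S\iso A_5$ in the statement covers the borderline identification.

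The main obstacle I anticipate is the bookkeeping in the last step: one must correctly identify \emph{all} non-abelian simple groups arising as Jordan--H\"older factors of $\GL_2(\F_\ell)$ across \emph{all} primes $\ell$, using the classification of subgroups of $\PGL_2(\F_\ell)$ (cyclic, dihedral, $A_4$, $S_4$, $A_5$, and subgroups containing $\PSL_2$), and verify that the only non-abelian simple ones that can occur are $\PSL_2(\F_\ell)$ itself and, through exceptional isomorphisms, $A_5\iso\PSL_2(\F_5)$. Since $\PGL_2(\F_\ell)$ acts on $\PPP^1(\F_\ell)$, its subgroups are well understood (Dickson's classification), and the non-abelian simple subquotients of a dihedral group or of $A_4, S_4$ are none, while $A_5$ is simple; so the only new non-abelian simple factor beyond $\PSL_2(\F_\ell)$ is $A_5$. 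This gives exactly the two alternatives in the statement, completing the proof modulo the (routine but lengthy) invocation of Dickson's subgroup classification.
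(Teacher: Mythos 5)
Your proposal is correct and follows essentially the same route as the paper: reduce to a single prime via the Chinese Remainder Theorem together with the solvability of the congruence kernel (the paper phrases this as passing to $\SL_2$ and to the radical of $N$, but the substance is identical), and then invoke the classical Dickson/Serre classification of subgroups of $\GL_2(\F_\ell)$ to conclude that the only non-abelian simple subquotients are $\PSL_2(\F_\ell)$ and $A_5$. The aside about $\GL_2(\F_9)$ and "$A_5$ only through $\PSL_2(\F_5)$" is muddled (the relevant point is that $A_5$ sits inside $\PSL_2(\F_\ell)$ for any $\ell\equiv\pm1\bmod 5$, e.g.\ $\ell=11$), but this does not affect the validity of the argument.
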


\begin{proof}
We may assume $H\subset \SL_2(\Z/N\Z)=\prod_{\ell|N} \SL_2(\Z/\ell\Z)$
as we only care about non-abelian simple Jordan-H\"older factors.
In addition, we may assume that $N$ is squarefree, i.e.,
equal to its own radical $N_0=\rad(N)$; indeed, the natural
map $r:\SL_2(\Z/N\Z) \to \SL_2(\Z/N_0\Z)$ has a solvable kernel that is a
product of $\ell$-groups, and the groups
$H$ and $H/(H\cap\ker r)\subset \SL_2(\Z/N_0\Z)$
$H$ and $H/(H\cap\ker r)\subset \SL_2(\Z/N_0\Z)$
have the same non-abelian simple Jordan-H\"older factors.
Thus, every non-abelian simple group that occurs in $H$ occurs
in a subgroup of some $\SL_2(\Z/\ell\Z)$, so we can reduce to the case
that $N=\ell$ is prime.
In this case the statement is a classical result that can be
found in \cite{Serre4}*{p. IV-23}.
\end{proof}

\begin{proof}[Proof of Theorem \ref{teorema cruciale}]
It suffices to show that for an integer $N>1$ divisible by all
primes in $T_{E/K}$ and $l\nmid N$ a prime number,
we have $K_N\cap K_\ell=K$.
Take such $N$ and~$\ell$.
Then $\ell\nmid N$ is unramified in the tower $\Q\subset K\subset K_N$
by (1) and (2), and since $K\subset K(\zeta_\ell)$ is totally ramified
over $\ell$ of degree $\ell-1>1$ by (1),
the fields  $K_N$ and $K(\zeta_\ell)$ are $K$-linearly disjoint.
It now suffices to prove that the normal extensions
$K(\zeta_\ell)\subset K_\ell$ and $K(\zeta_\ell)\subset K_N(\zeta_\ell)$
are linearly disjoint over $K(\zeta_\ell)$.

We have $\Gal(K_\ell/K(\zeta_\ell))\cong \SL_2(\F_\ell)$ by (3),
and for $\ell\ge5$ this group has a unique non-trivial
normal subgroup $\{\pm \id_\ell\}$
with simple quotient $\PSL_2(\F_\ell)$.
If $K_\ell \cap K_{N}(\zeta_\ell)$ is not equal to $K(\zeta_\ell)$, we find that
the non-abelian simple group $\PSL_2(\F_\ell)$, which is not $A_5$
as we assume $\ell\ne5$ by (1), is a Jordan-H\"older factor
of $\Gal(K_N(\zeta_\ell)/K(\zeta_\ell))\iso \Gal(K_N/K)$.
As we can view $\Gal(K_N/K)$ as a subgroup of $\GL_2(\Z/N\Z)$, this contradicts
Lemma \ref{group lemma}, since we have $\ell\nmid N$.
\end{proof}

\noindent
We are now ready to prove our main result for 
{\bf Case 1}.

\begin{proof}[Proof of Theorem \ref{factorizationofdeltaEK}]
We simply note that
the quantity $\delta_{E/K}(n)$ in \eqref{delta(n)}
is the inclusion-exclusion fraction of elements in the
Galois group $\Gal(K_n/K)$ that
have non-trivial restriction on every subfield $K_\ell$ with $\ell|N$.
Thus, if $n_1$ and $n_2$ are coprime numbers for which
the division fields $K_{n_1}$ and $K_{n_2}$ are $K$-linearly disjoint, we have
an equality $\delta_{E/K}(n_1n_2)=\delta_{E/K}(n_1)\delta_{E/K}(n_2)$.
If $N'$ is any squarefree multiple of the integer $N$ in
Theorem \ref{teorema cruciale}, this yields
$$
\delta_{N'}(E)= \delta_N(E) \cdot
        \prod_{\ell \mid N'/N, \ \ell \ \text{prime}}
        \left( 1- \frac{1}{[K_\ell:K]} \right).
$$
Taking the limit $N'\to\infty$ with respect to the divisibility ordering
yields Theorem \ref{factorizationofdeltaEK}.
\end{proof}

\noindent
For our purposes, we only need to apply Theorem \ref{teorema cruciale}
for squarefree values of $N$.
We can however strengthen its conclusion a bit and reformulate it
in the following way, as an explicit form of Serre's open image theorem.
This is the form used by Lombardo and Tronto in \cite{Lombardo}.

\begin{thm}
Let $K$ be a number field, $E/K$ an elliptic curve without CM,
and $T_{E/K}$ the set of prime numbers $\ell$ satisfying one of
\begin{enumerate}
\item $\ell \mid 2\cdot 3\cdot 5 \cdot\Delta_K$;
\item $\ell$ lies below a prime of bad reduction of $E$;
\item the Galois group $\Gal(K_\ell/K)$ is not isomorphic to $\GL_2(\F_\ell)$.
\end{enumerate}
For $\ell$ prime, write $K_{\ell^\infty}$ for the compositum of all
$\ell$-power division fields of $E$ over $K$, and
$K_T$ for the compositum of the fields $K_{\ell^\infty}$ with $\ell\in T_{E/K}$.
Then the family consisting of $K_T$ and $\{K_{\ell^\infty}\}_{\ell \notin T_{E/K}}$
is linearly disjoint over $K$.
\end{thm}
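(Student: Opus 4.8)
The plan is to reprove Theorem~\ref{teorema cruciale} one prime at a time, but now for the full towers $K_{\ell^\infty}$ rather than for the finite fields $K_\ell$, using two standard facts about $\GL_2(\Z_\ell)$ for $\ell\ge5$: the only non-abelian simple group occurring in it is $\PSL_2(\F_\ell)$, and (Serre's lifting lemma) a closed subgroup that reduces onto $\GL_2(\F_\ell)$ modulo $\ell$ is all of $\GL_2(\Z_\ell)$. First I would record that for $\ell\notin S$ we have $\Gal(K_\ell/K)\iso\GL_2(\F_\ell)$ (condition~(3) fails) and $\ell\ge7$ (condition~(1) fails), so that $\Gal(K_{\ell^\infty}/K)\iso\GL_2(\Z_\ell)$ by the lifting lemma; and, via the Weil pairing together with $\ell\nmid\Delta_K$ (so $\ell$ is unramified in $K/\Q$ and the cyclotomic character is onto $\Z_\ell^*$), the subfield of $K_{\ell^\infty}$ fixed by $\SL_2(\Z_\ell)=\ker(\det)$ is the tower $K(\zeta_{\ell^\infty})$ obtained by adjoining all $\ell$-power roots of unity, which is totally ramified at every prime of $K$ above $\ell$.

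Using the linear-disjointness criterion recalled after Definition~\ref{linearly disjoint} (valid for profinite Galois extensions after passing to finite quotients), the theorem reduces to two claims: (i) for every $\ell\notin S$ one has $K_{\ell^\infty}\cap M_\ell=K$, where $M_\ell$ is the compositum of $K_S$ and of all $K_{\ell'^\infty}$ with $\ell'\notin S$, $\ell'\ne\ell$; and (ii) $K_S\cap M=K$, where $M$ is the compositum of all $K_{\ell^\infty}$ with $\ell\notin S$. For~(i), put $L_0=K_{\ell^\infty}\cap M_\ell$. Since $M_\ell$ is a compositum of division towers $K_{q^\infty}$ over primes $q\ne\ell$, each finite quotient of $\Gal(M_\ell/K)$ is a quotient of its image in some $\GL_2(\Z/N\Z)$ with $\ell\nmid N$, so Lemma~\ref{group lemma} forbids $\PSL_2(\F_\ell)$ (which is not $A_5$, as $\ell\ge7$) from being a composition factor of $\Gal(M_\ell/K)$, hence of its quotient $\Gal(L_0/K)$. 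On the other hand $\Gal(K_{\ell^\infty}/L_0)$ is a closed normal subgroup of $\GL_2(\Z_\ell)$: if it did not contain $\SL_2(\Z_\ell)$, then by the lifting lemma its reduction modulo $\ell$ would not contain $\SL_2(\F_\ell)$, hence would meet $\SL_2(\F_\ell)$ in at most $\{\pm\id\}$, forcing $\PSL_2(\F_\ell)$ to occur in $\Gal(L_0/K)$, a contradiction. Thus $L_0\subseteq K(\zeta_{\ell^\infty})$; and since every prime of $K$ above $\ell$ has good reduction for $E$ (condition~(2) fails) and is coprime to every $q\ne\ell$, it is unramified in $M_\ell$ by the Néron--Ogg--Shafarevich criterion, while being totally ramified in $K(\zeta_{\ell^\infty})/K$. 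Hence $L_0=K(\zeta_{\ell^\infty})\cap M_\ell=K$.

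Claim~(i) shows in particular that $\{K_{\ell^\infty}\}_{\ell\notin S}$ is linearly disjoint, so $\Gal(M/K)\iso\prod_{\ell\notin S}\GL_2(\Z_\ell)$, whose non-abelian simple composition factors are precisely the $\PSL_2(\F_\ell)$ with $\ell\notin S$. For~(ii), put $F_0=K_S\cap M$. Then $\Gal(F_0/K)$ is a quotient both of $\Gal(M/K)$ and of $\Gal(K_S/K)$; the latter embeds in $\prod_{q\in S}\GL_2(\Z_q)$, so by Lemma~\ref{group lemma} (again through images in finite $\GL_2(\Z/N\Z)$ with $N$ supported on $S$) its non-abelian simple composition factors lie among $\{A_5\}\cup\{\PSL_2(\F_q):q\in S\}=\{\PSL_2(\F_q):q\in S\}$, using $5\in S$. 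Since $\{\PSL_2(\F_\ell):\ell\notin S\}$ and $\{\PSL_2(\F_q):q\in S\}$ are disjoint, $\Gal(F_0/K)$ has no non-abelian simple composition factor, i.e.\ it is pro-solvable. As $\SL_2(\Z_\ell)$ is perfect for $\ell\ge5$, the maximal pro-solvable quotient of $\prod_{\ell\notin S}\GL_2(\Z_\ell)$ is $\prod_{\ell\notin S}\Z_\ell^*$, so $F_0$ lies in the subfield of $M$ fixed by $\prod_{\ell\notin S}\SL_2(\Z_\ell)$, namely $\prod_{\ell\notin S}K(\zeta_{\ell^\infty})$. But $F_0\subseteq K_S$, and any prime of $K$ above a prime $\ell\notin S$ is unramified in $K_S$ (good reduction, coprime to all of $S$), whereas in $\prod_{\ell\notin S}K(\zeta_{\ell^\infty})$ the inertia subgroups at the primes above the various $\ell\notin S$ together topologically generate the full Galois group $\prod_{\ell\notin S}\Z_\ell^*$. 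Hence the only subextension of $\prod_{\ell\notin S}K(\zeta_{\ell^\infty})/K$ unramified at all those primes is $K$, and $F_0=K$.

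The step I expect to give the most trouble is the last one: a naive ramification comparison shows only that $F_0/K$ is unramified at all finite primes, which over a number field $\ne\Q$ does not force $F_0=K$. The fix is to use the group theory first --- perfectness of $\SL_2(\Z_\ell)$ and Lemma~\ref{group lemma} --- to trap $F_0$ inside the cyclotomic tower $\prod_{\ell\notin S}K(\zeta_{\ell^\infty})$, where the inertia at the relevant primes already fills the Galois group, and only then invoke unramifiedness. This mirrors the two-stage argument of Theorem~\ref{teorema cruciale} (first separate $K(\zeta_\ell)$ by ramification, then dispose of the $\SL_2/\PSL_2$ part via Lemma~\ref{group lemma}), now carried out simultaneously along each tower.
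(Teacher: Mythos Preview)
Your argument is correct and uses the same ingredients as the paper (Serre's lifting lemma, Lemma~\ref{group lemma}, and ramification at $\ell$ in the cyclotomic tower), but organises them differently and does more work than necessary. The paper reduces at once to the finite-level statement $K_N\cap K_{\ell^n}=K$ for each $\ell\notin S$, each $n\ge1$, and each integer $N$ divisible by the primes of $S$ with $\ell\nmid N$; since one can build the full compositum by adjoining the towers $K_{\ell^\infty}$ one at a time over $K_S$, this family of equalities already forces linear disjointness of the whole collection, and your separate claim~(ii) is redundant. For that finite-level statement the paper simply observes that $\Gal\bigl(K_{\ell^n}/(K_N\cap K_{\ell^n})\bigr)\subset\GL_2(\Z/\ell^n\Z)$ surjects onto $\GL_2(\F_\ell)$ (this is exactly Theorem~\ref{teorema cruciale}, the $n=1$ case already established) and onto $(\Z/\ell^n\Z)^*$ under the determinant (the ramification argument at $\ell$), whence by Serre's lemma \cite{Serre4}*{p.~IV-23, Lemma~3} it is the whole group. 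Your route instead first traps the intersection inside the cyclotomic tower via composition factors and the $\SL_2(\Z_\ell)$ lifting lemma, and only then uses ramification --- the same two steps, carried out profinitely and in reverse order, in effect re-deriving Theorem~\ref{teorema cruciale} rather than citing it. One small point worth tightening: the implication ``$H\not\supseteq\SL_2(\Z_\ell)\Rightarrow\bar H\not\supseteq\SL_2(\F_\ell)$'' that you attribute to the lifting lemma is true for \emph{normal} $H$, but the lifting lemma as stated applies to $H\cap\SL_2(\Z_\ell)$ rather than to $H$ itself, so a line is needed to pass from one to the other.
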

\begin{proof}
It suffices to show that for $N$ an integer divisible by all
primes in $T_{E/K}$ and $\ell\nmid N$ prime,
we have $K_N\cap K_{\ell^n}=K$ for every $n\in\Z_{>0}$.
For $n=1$, this is Theorem \ref{teorema cruciale}.

As $K\subset K_N$ is unramified over $\ell\nmid N$ by condition (2),
the intersection is $K$-linearly disjoint from
$K(\zeta_{\ell^n})$ by the condition $\ell\nmid\Delta_K$ in (1),
and it is $K$-linearly disjoint from $K_\ell$ by Theorem \ref{teorema cruciale}.
It therefore corresponds to a subgroup of
$\Gal(K_{\ell^n}/K)\subset \GL_2(\Z/\ell^n\Z)$ that maps
surjectively to $\Gal(K_\ell/K)=\GL_2(\F_\ell)$ by (3) and has
surjective image $(\Z/\ell^n\Z)^*$ under the determinant map.
By a result of Serre \cite{Serre4}*{p. IV-23, Lemma 3}, valid for $\ell\ge5$,
such a group is the full group $\GL_2(\Z/\ell^n\Z)$,
proving $K_N\cap K_{\ell^n}=K$.
\end{proof}

\section{Cyclic reduction densities for CM-curves}  \label{sec:CM_cycred}

\noindent
We now let $E$ be an elliptic curve defined over a number field $K$
for which $\OO=\End_{\Kbar} (E)$ is an imaginary quadratic order of
discriminant $\Delta(\OO)<0$.
In other words, the $j$-invariant $j_E\in K$ of $E$ and the
analytically defined $j$-invariant $j(\OO)$ of the order $\OO$
are algebraic numbers having the same irreducible 
polynomial over $\Que$.
We write $\Delta(\OO)=f^2 \Delta_F$, where $\Delta_F$ denotes the discriminant of the CM-field 
$F=\mathrm{Frac}(\OO)=\Que(\sqrt {\Delta(\OO)})$ of $E$
and the \emph{conductor} $f\in\Zee_{>0}$ the index of $\OO$ in the ring of integers $\OO_F$ of $F$.

If $F$ is contained in $K$,
the basic assumption of {bf Case 2},
the situation with respect to cyclic reduction resembles the non-CM case:
entanglement of the $\ell$-division fields $K_\ell$ concerns
only finitely many $\ell$, and for almost all $\ell$, the Galois group
of the extension $K\subset K_\ell$ is the full group $(\OO/\ell\OO)^*$.
The proof of the main result, Theorem \ref{thm:density containing the CM field}, will therefore be short.

\begin{proof}[Proof of Theorem \ref{thm:density containing the CM field}]
Just as in the proof of Theorem \ref{factorizationofdeltaEK}, it 
suffices to have the analogue of Theorem \ref{teorema cruciale} in the 
CM-case with $\OO\subset K$.
As in the proof of this Theorem, it suffices to show that for $N$
any integer divisible by the critical CM-primes that make up $T_{E/K}$ and
$\ell\nmid N$ prime, we have $K_N\cap K_\ell= K$.
This can be shown by a ramification argument.
On the one hand,
the extension $K\subset K_N$ is unramified at all primes over $\ell$,
as these are primes of good reduction of $E$ and $\ell$ does not divide $N$.
On the other hand, Campagna and Pengo show in 
\cite{Campagna-Pengo}*{Proposition 3.3} that any subextension $K\subset L$ of
$K\subset K_\ell$ is ramified above $\ell$ for $L\ne K$.
Moreover, we have $\Gal(K_\ell/K)=(\OO/\ell\OO)^*$ 
at these $\ell$, so the factor corresponding to $\ell$ in the infinite product  
\eqref{eq:density containing the CM field}
is $A_{\OO,\ell}$ as stated.
\end{proof}
\noindent
The argument of Campagna and Pengo mentioned above uses the fact that
$\ell$ does not divide the discriminant $\Delta_K$ of $K$ to obtain the
desired ramification behavior of $K\subset K_\ell$ over $\ell$.
As $\ell\nmid \Delta_K$ is not a necessary condition for this,
such $\ell$ can in many examples be omitted from $T_{E/K}$.
Note also that in the case $\OO\subset K$, all odd primes $\ell$
dividing $\Delta(\OO)$ will actually divide $\Delta_K$,
as not only $F=\mathrm{Frac}(\OO)$ is contained in $K$, but
also the ring class field $F(j(\OO))$ of the order $\OO$,
which is ramified over $F$ at all odd primes dividing the conductor $f$.

We finally come to {\bf Case 3}, when $\OO$ and $F$ are
\emph{not} contained in $K$.
This situation arises in particular for $K=\Que$. 
In this case the entanglement between the division fields $K\subset K_\ell$
is of a different, non-finite nature, as illustrated by the following example.

\begin{example} \label{example_cm}
    Consider the elliptic curve $E:y^2=x^3+x^2-3x+1$ over $K=\mathbf{Q}$, with $j$-invariant 
    $j_E=8000$ and complex multiplication by $\OO=\mathbb{Z}[\sqrt{-2}]$. 
    By Proposition \ref{prop:containment F in odd-division fields}(1) below, the CM-field $F=\Que(\sqrt {-2})$ is contained in the division field $K_\ell$ for every prime $\ell \geq 3$.
    It is not contained in the 2-division field $K_2=\Que(\sqrt 2)$ generated by the roots of 
    $x^3+x^2-3x+1=(x-1)((x+1)^2-2)$.
    We deduce that $E(\F_p)$ is non-cyclic with complete 2-torsion for primes $p\equiv \pm 1 \bmod 8$,
    whereas primes $p\equiv 5 \bmod 8$ are of cyclic reduction as they 
    do not split completely in any $\ell$-division field $K_\ell$.

    The primes $p\equiv 5 \bmod 8$ cease to be primes of cyclic reduction for $E$ 
    over $F=\Que(\sqrt{-2})$, as they are inert in $F$ and $E$ acquires 
    complete 2-torsion over $\F_{p^2}$.
    Primes $p\equiv 3\bmod 8$ are split in $F$ and give rise to \emph{two} primes $\gothp | p$ in $F$ with $E(k_\gothp)=E(\F_p)$.  
    This yields
    \begin{equation}
    \label{prevdensity}
    \delta_{E/\mathbf{Q}}=\textstyle \frac{1}{4} + \frac{1}{2}\delta_{E/F}.
    \end{equation}
    We now base-change $E$ to $L=\mathbf{Q}(i)$. 
    In this case $E$ has complete 2-torsion modulo primes $p\equiv 3\bmod 4$.
    The primes $p\equiv 1\bmod 4$ split into 2 primes, which are 
    of cyclic reduction for $p\equiv 5 \bmod 8$, 
    and have complete 2-torsion for $p\equiv 1\bmod 8$.
    We conclude that a prime $\gothp$ of $\Que(i)$ is a prime of cyclic reduction for $E$ if and only if its norm is a prime $p\equiv 1\bmod 8$, and find
    \[
    \delta_{E/L}=\textstyle\frac{1}{2}.
    \]
    Note that the cyclic reduction density of $E$ is bigger over $F$ than over $\Que$:
    the $\frac{1}{4}$ in \eqref{prevdensity} from rational primes $p\equiv5\bmod8$ becomes $\frac{1}{2}$, 
    and the $\frac{1}{2}\delta_{E/F}$ from rational primes 
    $p\equiv3\bmod8$ disappears as these primes are 
    inert in $L$: over their residue fields of order $p^2$ in $L$, the reduced elliptic curve has complete 2-torsion, since $p$ splits completely in $L\subset L_2=L(\sqrt{-2})$.
\end{example}

\noindent
Example \ref{example_cm} illustrates the much more general phenomenon concerning the relation between the CM-field $F$ and the $\ell$-division fields $K_\ell$.
\begin{prop}
\label{prop:containment F in odd-division fields}
Suppose that $E/K$ has $\CM$-order $\OO$ and $\CM$-field $F$. Then
\begin{itemize}
\item[\rm (1)]
the $\ell$-division field $K_\ell$ contains $F$ for all primes $\ell\ge 3$;
\item[\rm (2)]
the $2$-division field $K_2$ equals $KF$ for $\Delta(\OO)\equiv 1\bmod 8$;
\item[\rm (3)]
the $2$-division field $K_2$ contains $F$ for $\Delta(\OO)\equiv 5\bmod 8$;
\item[\rm (4)]
for $F\not\subset K$ and $\Delta(\OO)\equiv 0\bmod 4$,
the $2$-division field $K_2$ only contains $F$ in the special case
$\Delta(\OO)=-4$, with $E$ admitting a Weierstrass model 
$Y^2=X^3+aX$ with $a\in {K^*}^2$.
\end{itemize}
\end{prop}

\begin{proof}
Suppose we have $F \not \subset K_\ell$ for a prime number $\ell$.
Then the quadratic extension $K\subset KF$ and the extension
$K\subset K_\ell$ are $K$-linearly disjoint Galois extensions of $K$.
By the Chebotarev Density Theorem,
we can pick a degree $1$ prime $\gothp |p$ of $K$ of good reduction for $E$
that is coprime to $\ell$, inert in $K\subset KF$
and totally split in $K\subset K_\ell$.

The splitting condition in $K\subset KF$ implies,
by \cite{Lang}*{Chapter 13, Theorem 12},
that the reduced curve $E_\gothp$ is a
supersingular elliptic curve over $k_\gothp=\mathbf{F}_p$,
so it has $\# E_\gothp(k_\gothp)=p+1$ points.
The splitting of $\gothp$ in $K\subset K_\ell$ implies that $k_\gothp^*$ 
contains a primitive $\ell$-th root of unity and that 
$E_\gothp(k_\gothp)$ has full $\ell$-torsion.
The resulting divisibility relations
\begin{equation}
\label{prop: divisibilities}
\ell \mid p-1 \qquad\hbox{and}\qquad \ell^2 \mid p+1
\end{equation}
show that we have $\ell=2$, proving (1).

For discriminants $\Delta(\OO)\equiv 1\bmod8$ we have $\Delta(\OO)<-4$ or,
equivalently, $\OO^*=\{\pm1\}$.
In this generic case, which only excludes the special discriminant values 
$-3$ and $-4$, the 2-division field $K_2$ is invariant under twisting of $E$.
Replacing $E$ by a twist, if necessary, we will assume that 
$E$ is defined over $L=\Que(j_E)$ and with CM-order $\OO$.
Note that $L$ does not contain $F$, as $L$ has a real embedding
sending $j_E$ to $j(\OO)$.

From the theory of complex multiplication, 
we obtain the diagram of fields below
in the generic case $\OO^*=\{\pm1\}$.
In this diagram
$LF=F(j_E)=F(j(\OO))$ is the ring class field $H_\OO$ of $\OO$,
with Galois group $\Cl(\OO)$ over $F$.
It is unramified over $F$ outside the conductor of $\OO$, and
unramified over $\Que$ outside $\Delta(\OO)$.
As we have $\OO^*=\{\pm1\}$, the ray class field $H_{2,\OO}$
of conductor 2 of the order $\OO$ has Galois group $(\OO/2\OO)^*$ over $H_\OO$.
It can be obtained as the compositum $L_2F$ of $F$
with the 2-division field $L_2$ of $E$ over $L$.

\[\begin{tikzcd}
                                          & H_{2,\Ogotic} \arrow[rd, no head, "(\OO/2\OO)^*"]                     &                       &   \\
L_2 \arrow[rd, no head] \arrow[ru, no head] &                                           & H_{\Ogotic} \arrow[rd, no head, "\Cl(\OO)"] &   \\
                                          & L \arrow[rd, no head] \arrow[ru, no head,"2"] &                       & F\\
                                          &                                           & \Q \arrow[ru, no head,"2"] &
\end{tikzcd}\]

In the case $\Delta(\OO)\equiv 1\bmod8$, the ring $\OO/2\OO\iso \F_2\times \F_2$
has trivial unit group, so we have $H_{2,\OO}=H_\OO$ and two inclusions
$L\subset L_2\subset LF=H_\OO$ of which exactly one is an equality.
If we have $F\not\subset L_2$, the resulting equality $L=L_2$ implies 
that, trivially, all primes split completely in $L\subset L_2$.
In this case any odd prime $\gothp$ of $L$ of degree 1 that is inert in the
quadratic extension $L\subset LF$ and a prime of good reduction for $E$
satisfies $p=N_{L/\Que}\gothp\equiv -1\bmod4$ by~\eqref{prop: divisibilities}.
This implies $LF=L(i)$, so $H_\OO=LF$ contains $i$.
As the extension $\Que\subset H_\OO$ is unramified outside 
$\Delta(\OO)\equiv 1\bmod8$, this is a contradiction,
showing $F\subset L_2=LF$, hence $F\subset K_2=KF$, proving (2).

For $\Delta(\OO)\equiv 5\bmod8$, the ring $\OO/2\OO$ is the field of 4 elements,
with unit group of order 3 on which the generator of $\Gal(F/\Que)$
acts by inversion.
For $\Delta(\OO)\ne3$, this makes $\Gal(H_{2,\OO}/L)$ into the symmetric group on 3 elements,
so the normal extension $L\subset L_2$ coincides with $L\subset H_{2,\OO}$, 
and we have $F\subset L_2\subset K_2$.
For the special value $\Delta=-3$, we have $H_{2,\OO}=H_\OO$ as
the group $(\OO/2\OO)^*/\OO^*$ is trivial, but from a Weierstrass equation 
$Y^2=X^3-a$ for $E$ we see that we still have
$F=\Que(\sqrt{-3})\subset L_2=\Split_L(X^3-a)\subset K_2$.
This proves (3).

We finally suppose that we have $\Delta(\OO)\equiv 0\bmod 4$ and
$F\not\subset K$, with $\Delta(\OO)\ne-4$.
In this case the ring $\OO/2\OO\iso \F_2[X]/(X^2)$ has a nilpotent 
maximal ideal $x\OO$ of index~2, and its unit group $\{1, 1+x\}$ has order 2.
As $L\subset H_{2,\OO}$ is a Galois extension of degree~4 that is the 
compositum of the quadratic extension $L\subset LF$ and the Galois extension 
$L\subset L_2$ of degree dividing 6, we conclude that $\Gal(H_{2,\OO}/L)$
is a Klein 4-group, and that $L\subset L_2$ and $L\subset LF$
are $L$-linearly disjoint quadratic extensions.
As we have $F\not\subset K$ by assumption, the quadratic extension $L_2\subset L_2F$ remains quadratic after taking composita with $K$, so $K_2$ does not contain $F$. 
In the excluded case $\Delta(\OO)=-4$, we can represent $E$ by a Weierstrass equation $Y^2=X^3-aX$ with $a\in K^*$.
In this case, $K\subset K_2=K(\sqrt{-a})$ is an extension of 
degree at most 2 that coincides with $F=\Que(i)$ if and only if we have $a\in {K^*}^2$. 
\end{proof}

\begin{proof}[Proof of Theorem \ref{thm:density not containing the CM field}]
We treat the cyclic reduction density of the sets of primes of $K$ for which 
$E$ has supersingular and ordinary reduction separately, 
and write $\delta_{E/K}$ accordingly as $\delta_{E/K}=\delta^\mathrm{ss}+\delta^\mathrm{ord}$

The supersingular primes are inert in the quadratic extension $K\subset KF$, so, by
Proposition \ref{prop:containment F in odd-division fields},
they do not split completely in the $\ell$-division fields $K_\ell$
when $\ell$ is an odd prime.
If $\Delta(\OO)$ is odd, the same applies to $\ell=2$.
In view of Corollary \ref{S_EKdescription}, we deduce that 
all primes $\gothp\nmid\Delta_K$ of supersingular reduction have cyclic reduction if $\Delta(\OO)$ is odd, leading to the contribution
$\delta^\mathrm{ss} = 1/2$ to $\delta_{E/K}$.
In the subcase $\Delta(\OO)\equiv 1\bmod 8$, we have $KF=K_2$, so the ordinary primes, which split completely in $K\subset KF$, cannot have cyclic reduction as the reduced curve will have complete 2-torsion.
Thus $\delta^\mathrm{ord}=0$ in this case.

When $\Delta(\OO)$ is even, supersingular primes are
primes of cyclic reduction if and only if they do not split 
completely in the extension $K\subset K_2$.
We find $\delta^\mathrm{ss} = \delta^\mathrm{ord} =0$ if we have $K=K_2$.
In all other cases $K\subset K_2$ and $K\subset KF$ are both quadratic extensions, and we find $\delta^\mathrm{ss} \in\{\frac{1}{4}, \frac{1}{2}\}$
depending on whether they are different or the same.
If they are the same, we have $\delta^\mathrm{ord} = 0$, since
the ordinary primes then split completely in $K_2=KF$.

The primes $\gothp$ of ordinary reduction of $E/K$ are split in the quadratic extension $K\subset KF$, and the point group $E(k_\gothp)$
is the same for these extension primes, which both have norm $N_{K/\Que}(\gothp)$.
Thus every ordinary prime $\gothp$ of cyclic reduction in $K$ corresponds to 
two ordinary primes of cyclic reduction in $KF$ of the same norm.
This yields $\delta^\mathrm{ord}=\frac{1}{2} \delta_{E/KF}$, and finishes the proof.

Note that as $KF$ does contain the CM-field $F$ of $E$, the ordinary density 
$\frac{1}{2}\delta_{E/KF}$ 
can be written in the form given by Theorem \ref{thm:density containing the CM field}.
\end{proof}
\noindent 
We summarize what we proved above Table 4.3, which gives more precise information than Theorem \ref{thm:density not containing the CM field} for the case $F\not\subset K$ and reduces the computation of $\delta_{E/K}$ in {\bf Case 3} to the computation of $\delta_{E/KF}$ in {\bf Case 2}.

\vspace{.5cm}
\centerline{{\bf 4.3. Table.} 
{\sl Supersingular and ordinary densities for}
$F\not\subset K$}
\vspace{-.3cm}

\begin{center}
\begin{table}[h]
\begin{tabular}{|c|c|c|ccc|}
\hline \rule{0pt}{3ex} \rule[-1.5ex]{0pt}{0pt}
\hspace{0.1cm} $\Delta(\OO)$   \hspace{0.1cm}      & \hspace{0.1cm} $1\bmod 8$ \hspace{0.1cm}   & \hspace{0.2cm} $5 \bmod 8$ \hspace{0.2cm}                & \multicolumn{3}{c|}{$0 \bmod 4$}                                                            \\ \hline \rule{0pt}{3ex}
$\delta^\mathrm{ss}$  & $\frac{1}{2}$ & $\frac{1}{2}$               & \multicolumn{1}{c|}{\hspace{0.07cm} $0$ \hspace{0.2cm}} & \multicolumn{1}{c|}{\hspace{0.07cm} $\frac{1}{4}$ \hspace{0.2cm}}               & \hspace{0.07cm} $\frac{1}{2}$ \hspace{0.1cm} \\ \rule{0pt}{2.8ex} \rule[-1.7ex]{0pt}{0pt}
$\delta^\mathrm{ord}$ & $0$           & $\frac{1}{2} \delta_{E/KF}$ & \multicolumn{1}{c|}{\hspace{0.07cm} $0$ \hspace{0.2cm}} & \multicolumn{1}{c|}{\hspace{0.07cm} $\frac{1}{2} \delta_{E/KF}$ \hspace{0.2cm}} & \hspace{0.07cm} $0$  \hspace{0.1cm}         \\ \hline 
\end{tabular}
\end{table}
\end{center}
\vspace{-.5cm}

\noindent
The three possibilities for $\delta^\mathrm{ss}\in\{0, \frac{1}{4}, \frac{1}{2}\}$
listed in the case $\Delta(\mathcal{O}) \equiv 0 \bmod 4$ correspond to the respective cases 
\[
K=K_2, \qquad \hbox{both extensions $K \subsetneq K_2 \subsetneq K_2 F$ quadratic},
\qquad K_2=KF.
\] 
Note that for the values $\Delta(\mathcal{O}) \equiv 0 \bmod 4$ different from 
$-4$, twisting the elliptic curve $E/K$ does not change $F$ and $K_2$ and which of the
three subcases we are in.
In the special case $\Delta(\mathcal{O})=-4$ we have $F=\Que(i)$, and $E$ is a twist of the curve with Weierstrass model $Y^2=X^3-aX$ over $K$ that has $K_2=K(\sqrt a)$.
Here all three possibilities mentioned above occur for certain $a\in K^*$.

\section{Vanishing of the density}

\noindent
The most common cause for the vanishing of the density 
$\delta_{E/K}$ in \eqref{deltaEK} is that we have $K=K_\ell$ for some prime $\ell$.
This can only occur for primes $\ell|2\Delta_K$, as $K=K_\ell$ implies that we have $\zeta_\ell\in K$.
In our {\bf Case 3} of CM-curves with CM-field 
$F\not \subset K$, Table 4.3 shows that this trivial cause for vanishing is the only possible one, and that it only happens for $\ell=2$.

If we have $K\ne K_\ell$ for all primes $\ell$ in either 
the non-CM {\bf Case 1} or the {\bf Case 2} of CM by $\OO\subset K$, there is 
still the interesting possibility of $\delta_{E/K}$  vanishing \emph{non-trivially}.
If one writes $\delta_{E/K}$ as explained in the Introduction as
\begin{equation}
\label{delta=alphaA_EK}
\delta_{E/K}= \alpha_{E/K}\cdot A_{E/K},
\end{equation}
with $A_{E/K} =\prod_{\ell \ \text{prime}} (1-[K_\ell:K]^{-1})$
the \emph{naive density} from \eqref{naive density}
and
$\alpha_{E/K}\in\Q_{\ge0}$ a rational \emph{entanglement correction factor},
then non-trivial vanishing amounts to having
\begin{equation}
\label{nontrivialvanishing}
\delta_{E/K}=\alpha_{E/K}=0\quad\text{and}\quad A_{E/K}>0.
\end{equation}
In this case all $K_\ell$ are different from $K$, but the
non-splitting conditions in the various $K_\ell$ cannot be
satisfied simultaneously.

Murty proved \cite{Murty3}*{Theorem 1} that non-trivial vanishing does not
happen for $K=\Q$: we have $\delta_{E/\Q}=0$ if and only if $E$ has full
2-torsion over $\Q$.
Over other number fields, non-trivial vanishing of $\delta_{E/K}$ is a rare occurrence, but we can
make it happen by base changing elliptic curves $E$ defined
over a small field such as $\Q$ to a well-chosen number field.

The underlying idea is an elliptic analogue of a similar
construction in the multiplicative setting \cite{Jones}*{Section 3}.
One starts with an elliptic curve $E/K$ with $\mathrm{End}_K(E)=\mathrm{End}_{\overline{K}}(E)$, i.e.,
in {\bf Case 1} or {\bf Case 2}, and considers $E$
over an extension $K\subset K'$ for which the
$\ell$-division fields $K'_\ell$ of $E$ over $K'$
for primes $\ell_1, \ell_2$, and $\ell_3$ are different
\emph{quadratic} extensions of $K'$, but with compositum $K'_{\ell_1 \ell_2 \ell_3}$
a multi-quadratic extension of $K'$ of degree 4, and not 8.
In this case, no prime of $K'$
can be inert in all three subextensions $K'_\ell$, and almost all reduced curves
at primes of $K'$ will have complete $\ell$-torsion for at least one value
$\ell\in\{\ell_1, \ell_2, \ell_3\}$, implying that $S_{E/K'}$ is finite.
The construction has many degrees of freedom. 
Both in {\bf Case 1} and {\bf Case 2}, it leads to
infinitely many different curves and number fields
for which non-trivial vanishing as in \eqref{nontrivialvanishing} occurs.

\begin{thm}
\label{vanishing theorem}
Let $E/K$ be any elliptic curve with 
$\mathrm{End}_K(E)=\mathrm{End}_{\overline{K}}(E)$, and
suppose that $E$ has naive density $A_{E/K}>0$.
Then for any finite normal extension $K\subset M$,
there exists a normal extension $K\subset K'$ that is $K$-linearly disjoint 
from $M$ and for which $\delta_{E/K'}$ vanishes non-trivially.
\end{thm}

\begin{proof}
Let $N$ be an integer divisible by all critical primes,
as in \eqref{NEK}.
Then the $N$-division field $K_N$ and the $\ell$-division fields
$K_\ell$ at $\ell\nmid N$ of $E$ form 
a linearly disjoint family over~$K$,
and $\Gal(K_\ell/K)$ is isomorphic to the full group
$\GL_2(\F_\ell)$ or $(\OO/\ell\OO)^*$, depending on whether 
$\mathrm{End}_{\overline{K}}(E)$ equals $\Zee$ or $\OO$.

Now let $K\subset M$ be a finite normal extension, and
replace $K_N$ by the compositum $MK_N$.
Then the family may no longer be $K$-linearly disjoint,
but it becomes $K$-linearly disjoint again after leaving out
finitely many $K_\ell$ from the family.
This is because any finite extension of $K$ contained in the
compositum of some $K$-linearly disjoint family of division fields
$K_\ell$ is contained in the compositum of \emph{finitely many}
$K_\ell$, and these are the ones that we leave out.

Now pick \emph{any} set $\{\ell_1,\ell_2,\ell_3\}$ of odd primes that have
not been left out.
Then the $\ell_1\ell_2\ell_3$-division field
$K_{\ell_1\ell_2\ell_3}$ of $E$ is Galois over $K$ with group
$$
G=
\prod_{i=1}^3 \mathrm{Gal}(K_{\ell_i}/K)=\prod_{i=1}^3 G_i
$$
where each $G_i=\mathrm{Gal}(K_{\ell_i}/K)$ is isomorphic to either $\GL_2(\F_{\ell_i})$ or to $(\OO/\ell_i\OO)^*$.
\noindent
Every $G_i$ thus contains a central subgroup
$\langle-1\rangle$ generated by $-\id_{\ell_i}\in \GL_2(\F_{\ell_i})$ or by $-1\in (\OO/\ell_i\OO)^*$,
so the center of $G$ contains an elementary abelian 2-group
$H'=\prod_{i=1}^3 \langle-1\rangle \subset G$ of order 8.
We let $H\subset H'$ be the `norm-1-subgroup' of order 4
consisting of elements $(e_i)_{i=1}^3\in H$ with $e_1e_2e_3=1$.
Then $H$ is normal in $G$,
and we take for $K'$ the invariant field $K'=K_{\ell_1\ell_2\ell_3}^H$.

We now view $E$ as an elliptic curve over the finite normal extension
$K'$ of $K$, and note that
the division field $K'_{\ell_1\ell_2\ell_3}=K_{\ell_1\ell_2\ell_3}$
is by construction Galois over $K'$ with group isomorphic to the
Klein four-group $H$.
As every non-trivial element of $H$ is the identity on exactly one
of the division fields $K'_{\ell_i}$,
the three intermediate quadratic extensions of
$K'\subset K'_{\ell_1\ell_2\ell_3}$ are the
division fields $K'_{\ell_i}$, and no prime of $K'$ will be inert in
all three of them.
This implies that we have $\delta_{E/K'}=0$.

As the naive density $A_{E/K'}$ in \eqref{naive density} differs
from $A_{E/K}>0$ only in the three factors corresponding to the primes
$\ell_i$, with the degree $[K_{\ell_i}:K]=\# G_i$
being replaced by $[K'_{\ell_i}:K']=2$, we still have $A_{E/K'}>0$,
so the vanishing of $\delta_{E/K'}$ is non-trivial.
\end{proof}

\noindent
\begin{remark}
Our proof of Theorem \ref{vanishing theorem} only uses the fact
that an element of order 2 is contained in $\Gal(K_{\ell_i}/K)$,
and that the Klein four-group $H$ in the proof is contained in
$G=\Gal(K_{\ell_1\ell_2\ell_3}/K)\subset \prod_{i=1}^3 G_i$.
This observation is useful when constructing an explicit example of an
elliptic curve $E$ over a `small' normal number field $K'$ for
which $\delta_{E/K'}$ vanishes non-trivially.
In the non-CM case, if one does not insist on $K'$ being normal over $K=\Q$,
one can use any element of order 2 in
$\Gal(K_{\ell_i}/K) \subset\GL_2(\F_{\ell_i})$ instead of $-\id_{\ell_i}$. In all cases, 
one can use small primes $\ell_i$ for which $K_{\ell_i}$ is of small even
degree.
\end{remark}

\begin{example}
\label{ex:vanishing}
The elliptic curve $E$ defined over $K=\Q$ by the minimal Weierstrass equation
$$y^2+xy+y=x^3-76x+298$$
does not have CM, has discriminant $\Delta_E=-2^{5} \cdot 5^{8}$
and, according to LMFDB \cite{LMFDB}, its mod $\ell$ Galois representations
are maximal at $\ell\ne 3, 5$.
The division field $K_3$ is non-abelian of degree 6,
smaller than the generic degree $48=\#\GL_2(\F_3)$,
and $K_5$ is Galois with group $A_{20}=C_5\rtimes \Aut(C_5)$,
the affine group over $\F_5$ of order 20, much smaller than
the generic group $\GL_2(\F_5)$ of order 480.
As the division fields $K_2$ and $K_3$ are non-abelian of degree~6
with different quadratic subfields $\Q(\sqrt{\Delta_E})=\Q(\sqrt{-2})$
and $\Q(\zeta_3)$, they are linearly disjoint over $K=\Q$.
As $K_6$ and $K_5$ are solvable extensions of $\Q$
with maximal abelian subfields $\Q(\sqrt{-2},\zeta_3)$ and $\Q(\zeta_5)$
that are linearly disjoint over $\Q$, the division fields
$K_2$, $K_3$ and $K_5$ are $\Q$-linearly disjoint of even degree,
so $\Gal(K_{30}/\Q)\iso S_3\times S_3\times A_{20}$ does contain an
elementary abelian 2-subgroup $H'$ of order 8 as in the proof of Theorem
\ref{vanishing theorem}.
We can therefore find a non-normal field $K'$ of degree
$[K':\Q]=6\cdot6\cdot 20/4=180$ inside $K_{30}$ for which
$\delta_{E/K'}$ vanishes non-trivially.
\end{example}

\begin{example} 
\label{ex:cm_vanishing}
We can find a `smaller' example of non-trivial vanishing if we turn our attention to CM elliptic curves. 
Put $\pi=1+\sqrt{-2}\in K=\Que(\sqrt{-2})$ and
consider the curve
\[
E_0: y^2+(\pi-1)xy + y = x^3+\pi x^2-\pi x
\]
with $j_{E_0}=8000$ over $K$. 
Then $E_0$ has CM by $\mathcal{O}=\mathbf{Z}[\sqrt{-2}]$, 
and the 3-division field $K_3=K(\sqrt{-3})$ is equal to the ray class field of conductor $3$ of $K$ generated by the $x$-coordinates of the points of order 3 in $E(\Kbar)$.
This implies that the family of $\ell$-division fields $K_\ell$ is linearly disjoint over $K$,
with $\mathrm{Gal}(K_\ell/K) \iso \left(\mathcal{O}/\ell \mathcal{O} \right)^\ast$ for all primes $\ell \neq 3$. 
In particular, $K_2$ and $K_5$ are cyclic extensions of $K$ of degree $2$ and $24$. 
This yields $\Gal(K_{30}/K)\iso C_2 \times C_2 \times C_{24}$ and, as in the previous example, we can find a field $K'$ of degree
$[K':\Q]=48$ inside $K_{30}$ for which
$\delta_{E_0/K'}$ vanishes non-trivially.

The curve $E_0$ has been obtained by twisting the elliptic curve $E$ from Example \ref{example_cm} in order to obtain `minimal Galois group' $\Gal(K_3/K)\iso (\OO/3\OO)^*/\{\pm1\}$ for the 3-division field. 
More precisely, we have $3=\pi\overline\pi$ and
\[
E[\overline\pi](\Kbar)= \textstyle 
\left 
\{O, 
\left 
( \frac{\pi}{3}, 
\pm \frac{2}{3}\sqrt{1-\frac{5}{3}\pi}
\right 
) 
\right \}.
\]
Hence, the twist of $E/K$ by 
$K(\sqrt{1-5\pi/3})=K(\sqrt{2+\sqrt{-2}})$ 
achieves the minimal $3$-division field by \cite{Campagna-Pengo}*{Theorem 5.11} and \cite{Campagna-Pengo2}*{Sections 4 and 5}. 
The curve $E_0$ given here is a minimal model for this twist.
\end{example}

\noindent
We do not know whether there exist examples of non-trivial
vanishing over number fields of degree less than 48.
We also do not know if there are examples that do not arise by
base change, i.e., an example in which
$\delta_{E/K}$ vanishes non-trivially for $K=\Q(j(E))$.
\section{Numerical examples}

\noindent
In this final Section, we will show by examples how one may
explicitly compute cyclic reduction densities.
The non-CM Case 1, with which we start, is the more complicated case, as already in the simplest case $K=\Que$, there are quite a few possibilities one may encounter for the non-generic Galois groups $\Gal(K_\ell/K)$. 
Moreover, as these typically non-abelian groups can be entangled in various ways, our small sample of examples can only sketch a very incomplete picture.

In order to compute the $\delta_{E/K}=\alpha_{E/K}\cdot A_{E/K}$
as in \eqref{delta=alphaA_EK} for a non-CM elliptic curve $E/K$, 
one needs explicit knowledge of finitely many Galois groups 
$\Gal(K_\ell/K)$ different from $\GL_2(\F_\ell)$.
For small examples over $\Que$, these are listed in LMFDB \cite{LMFDB}.
This list enables us to compute the naive density $A_{E/K}$ as a rational multiple of the elliptic non-CM Artin constant $A$ from \eqref{Ainfty}.

Finding the exact entanglement correction factor $\alpha_{E/K}$ 
for the fields $K_\ell$ with $\ell$ from the set $T_{E/K}$ of critical primes can be more complicated.
It typically involves group theory and ramification arguments.

Over $K=\Que$, there is a type of entanglement noticed by Serre that prevents the Galois representation on the torsion points of $E/\Que$ in all cases to be the full group $\GL_2(\Zhat)$.
It arises from the fact that the parity of a permutation of the three non-trivial 2-torsion points of $E$ under Galois is given by 2 \emph{different} quadratic characters on $\GL_2(\Zhat)$.
One is the parity map $\varepsilon: \GL_2(\Zhat)\tto\{\pm1\}$
that factors via $\GL(\F_2)\cong S_3$, the other is the 
composition $\chi_D: \GL_2(\Zhat)\ \mapright{\det}\ \Zhat^*\tto\{\pm1\}$
of the determinant map with the Dirichlet character of conductor 
$D=\disc({\Que(\sqrt{\Delta_E})})$ corresponding to the extension $\Que\subset \Que(\sqrt{\Delta_E})$.
As a consequence, the image of $G_\Que$ in $\GL_2(\Zhat)$ is contained in the kernel of the character $\varepsilon\chi_D$ of order 2.
Elliptic curves $E/\Que$ for which the image is equal to this subgroup of order 2 are known as \emph{Serre curves}.
Generically, elliptic curves over $\Que$ are known to be Serre curves \cite{Jones}, so this is by far the most common form of entanglement of division fields.

In the context of cyclic reduction, this Serre type entanglement
is only relevant if $D=\disc({\Que(\sqrt{\Delta_E})})$ 
is squarefree or, equivalently, congruent to $1\bmod 4$.
For Serre curves, the entanglement correction factor $\alpha_{E/\Que}$ is therefore
equal to 1 if $D$ is even,
and equal to 
\begin{equation}
\label{charsummethod}
\alpha_{E/\Q}=
1+\prod_{\ell\mid 2D, \ \ell \ \text{prime}} \frac{-1}{[K_\ell:\Q]-1}
\end{equation}
if $D$ is odd.
This follows from the \emph{character sum method}
\cite{LMS}*{Theorem 8.4},
which more or less mechanically produces entanglement correction
factors in all cases where, in terms of Definition \ref{def:linearlydisjoint}, the Galois group over $K$ of
the compositum $K_\infty$ of all $\ell$-division fields $K_\ell$ is a normal subgroup $\Gal(K_\infty/K)\subset \prod \Gal(K_\ell/K)$ for which the quotient is finite and abelian.
The simplest formula arises when the index equals 2.

For $K$ `small enough' to be numerically accessible, 
such as $K=\Que$,
the degrees $[K_\ell:K]$ going into the computation of $\alpha_{E/K}$
are typically close to their maximal degree, which is of order $\ell^4$.
The resulting value of $\alpha_{E/K}$ in \eqref{charsummethod}
is therefore often so close to 1 that the difference between 
the naive density $A_{E/K}$ 
and the actual cyclic reduction density $\delta_{E/K}$ 
cannot be observed numerically.
Clearly, only approximate correctness can be confirmed by a computer 
count of the fraction of primes of cyclic reduction among primes 
of norm below some modest bound.

For the non-CM curves $E$ over $K=\Que$ in Table 2 below, 
we computed the theoretical cyclic reduction density
$\delta_E$ and compared it to numerically computed
fraction $d_E(10^6)$ of the 78,498 rational primes
below $10^6$ for which the reduction was cyclic.
In all cases, the difference $|\delta_E-d_E(10^6)|$ was very small: at most $.0006$.

\vspace{.5cm}
\centerline{{\bf Table 2.} 
{\sl Seven examples of non-CM cyclic reduction densities over $K=\Que$}}
\vspace{-.2cm}

\begin{table}[h]
\begin{center}
\begin{adjustbox}{width=\columnwidth,center}
\begin{tabular}{|c|c|c|c|c|c|c|}
\hline \rule{0pt}{3ex} \rule[-1.5ex]{0pt}{0pt}
  $i$ & $E_i$ & $\Delta_{E_i}^{\text min}$  & $\alpha_{E_i}$ & $A_{E_i}$ & $\delta_{E_i}$ & $d_{E_i}(10^6)$ \\
 
\hline  \rule{0pt}{3ex} \rule[-1.5ex]{0pt}{0pt}
1 & $y^2+xy=x^3-x^2-x+1$ & $-2^2\cdot29$ & 1 & $A$ & .8138 & .8146\\

\hline \rule{0pt}{3ex} \rule[-1.5ex]{0pt}{0pt}
2 & $y^2=x^3+x+3$ & $-2^4\cdot13\cdot19$ & 1.0000 & $A$ & .8138 & .8141\\

\hline  \rule{0pt}{3ex} \rule[-1.5ex]{0pt}{0pt}
3 & $y^2+xy+y=x^3-76x+298$ & $-2^5\cdot5^8$ & 1 & $\frac{18240}{22513} A$ &  .6593 & .6588 \\

\hline \rule{0pt}{3ex} \rule[-1.5ex]{0pt}{0pt}
4 & $y^2=x^3-3x+1$ & $2^4\cdot3^4$ & 1 & $\frac{4}{5} A$ & .6510 & .6510\\

\hline \rule{0pt}{3ex} \rule[-1.5ex]{0pt}{0pt}
5 & $y^2=x^3+2x+3$ & $-2^4\cdot5^2\cdot11$ & 1.0001 & $\frac{3}{5} A$ & .4883 & .4889\\


\hline \rule{0pt}{3ex} \rule[-1.5ex]{0pt}{0pt}
6 & $y^2=x^3+x^2+7x$ & $2^2\cdot3\cdot7$ & 1.2 & $\frac{24}{47} A$ & .4986 &.4990\\

\hline \rule{0pt}{3ex} \rule[-1.5ex]{0pt}{0pt}
7 & $y^2+xy+y=x^3-36x-70$ & $-2^4\cdot3^3\cdot7^2$ & 1 & $\frac{72}{235} A$ & .2493 &.2490\\

\hline
\end{tabular}
\end{adjustbox}
\end{center}
\end{table}

\noindent
The curves in Table 2 are given by their minimal Weierstrass equations from LMFDB~\cite{LMFDB},
with discriminant $\Delta_E^{{\rm min}}$ dividing the discriminant $\Delta_E$
of a short Weierstrass model in~\eqref{Delta_E}.

\begin{example}
The curves $E_1$ and $E_2$ are more or less arbitrarily chosen Serre curves from LMFDB,
with $D_i=\disc(\Q(\sqrt{\Delta_{E_i}}))$ equal to $D_1=-2^2\cdot 29$ and  \hbox{$D_2=-13\cdot 19$}.
We omit the full proof that $E_1$ and $E_2$ are indeed Serre curves.
In the first case we have cyclic reduction density 
$\delta_{E_1/\Que}=A$, in the second case we have to multiply $A$ by an
entanglement correction factor 
$$
\alpha_{E_2}= 1+\prod_{\ell | 2\cdot 13\cdot 19} \frac{-1}{[K_\ell:\Q]-1}
        \approx 0.999999999938,
$$
that is numerically invisible in $\delta_{E_2}\approx A$.
\end{example}

\begin{example}
The curve $E_3$ with discriminant $\Delta_E=-2^{5} \cdot 5^{8}$
appeared in Example \ref{ex:vanishing}, 
where we proved that the family of division fields $K_\ell$ is linearly disjoint over $\mathbf{Q}$, 
with $[K_3:\mathbf{Q}]=6$ and $[K_5:\mathbf{Q}]=20$ having 
smaller degrees over $\Que$ than the 
generic values 48 and 480. 
Changing the corresponding Euler factors in $A$ yields a
naive density
\[
A_{E_3}=\prod_{\ell \ \text{prime}}
        \left(1-\frac{1}{[K_\ell:\Q]} \right)
        = \frac{5}{6} \cdot \frac{19}{20} \cdot \frac{48}{47} \cdot  \frac{480}{479} \cdot A = \frac{18240}{22513} \cdot A \approx 0.6455651
\]
that coincides with the actual density $\delta_{E_3}$ and is in good 
numerical agreement with $d_{E_3}$.
\end{example}

\begin{example}
For the elliptic curve $E_4: y^2=x^3-3x+1$
the splitting field $K_2$ of the
polynomial $x^3-3x+1$ is the real subfield $K_2=\Q(\zeta_9)^+$
of $\Q(\zeta_9)$, which is cubic, and not of maximal degree~6.
As all other $K_\ell$ have maximal degree, 
the naive density equals
\[
A_{E_4} = \frac{2}{3} \cdot \frac{6}{5} \cdot A =
\frac{4}{5} \cdot A\approx 0.6510015.
\]
The group $\Gal(K_3/\Q)\iso\GL_2(\F_3)$ has no quotient of order 3,
so the division field $K_6$ is a linearly disjoint compositum 
of $K_2$ and $K_3$.
We have $K_6\cap K_5=K$ as the intersection is solvable over $K$,
but does not contain $\sqrt 5\notin K_6$.
We can take $N=2\cdot3\cdot5$ in Theorem
\ref{teorema cruciale}, so we find that
the family of $\ell$-division fields $K_\ell$ is $\Q$-linearly disjoint, and that
$\delta_{E_4}$ equals the naive density $A_{E_4}$.
The numerical agreement is excellent.
\end{example}

\begin{example}
The elliptic curve $E_5: y^2=x^3+2x+3=(x+1)(x^2-x+3)$
has a unique rational torsion point of order 2 and 2-division field 
$K_2=\Q(\sqrt{-11})$.
For $\ell>2$, the degree of $K_\ell$ is maximal, so the naive density
equals
\[
A_{E_5} = \frac{1}{2} \cdot \frac{6}{5} \cdot A =
\frac{3}{5} \cdot A\approx 0.48825114.
\]
We can take $N=2\cdot3\cdot5\cdot11$ in Theorem \ref{teorema cruciale}.
As $K_2$ is not the unique quadratic subfield $\Q(\zeta_3)$ of $K_3$,
the extension $K_6$ is a linearly disjoint compositum of
$K_2$ and $K_3$.
Again, $K_6$ is solvable and does not contain $\sqrt 5$,
so it is linearly disjoint from $K_5$.

The family of division fields $\{K_\ell\}_{\ell\ne 11}$
is linearly disjoint over $\Q$, and 
$K_2=\Q(\sqrt{-11})\subset \Q(\zeta_{11})$
is a quadratic subfield of $K_{11}$: 
the familiar Serre-type entanglement.
In this case multiplying $A_{E_5}$ by $\alpha_{E_5}$ according 
to \eqref{charsummethod} amounts to leaving out the Euler factor 
$1-1/[K_{11}:\Q]=\frac{13199}{13200}$ from $A_{E_5}$:
any prime that does not split completely in $K_2$
\emph{automatically} does not do so in $K_{11}$, 
making the non-splitting condition in $K_{11}$ for
primes of cyclic reduction superfluous.
The resulting value
$\delta_{E_5}=\frac{13200}{13199}A_{E_5}\approx 0.4882881$
is too close to $A_{E_5}$ to detect 
the entanglement correction numerically.
\end{example}

\begin{example}
The entanglement correction can be made numerically visible by
selecting a curve such as $E_6: y^2=x(x^2+x+7)$, 
which has $K_2=\Que(\sqrt{-3})$ equal to the quadratic subfield 
of $K_3$ rather than of $K_{11}$, and a rational 3-torsion point $(1,3)$ that causes $K_3=K_6$ to be of degree 6.
There is no further entanglement, so the correction factor 
$\alpha_{E_6}=\frac{6}{5}= 1.2$ removing the Euler factor at 3
makes the naive density
$A_{E_6}=\frac{24}{47}A\approx .42$ very different from the
actual density $\Delta_{E_6}=\frac{144}{235}A\approx .50$.
\end{example}

\begin{example}
Our final example $E_7$ shows that for non-CM curves, small division
fields $K_\ell$ at small $\ell$ yield cyclic reduction densities 
that are very far from the `generic density' $A$ of about $.81$.
In the case of $E_7$, both
$K_2=\Que(\sqrt 2)$ and $K_3=\Que(\sqrt {-3})$ 
are quadratic and linearly disjoint from $K_5$ and $K_7$.
The cyclic reduction density 
$\delta_{E_7}=A_{E_7}=\frac{72}{235} A\approx .25$ is therefore
much smaller than $A$.
\end{example}

In the case of an elliptic curve $E$ with $CM$-order $\OO$, 
Theorem \ref{thm:density not containing the CM field} reduces
the calculation of $\delta_{E/K}$ to the case in which 
$K$ contains $\OO$, and therefore the 
ring class field $H_\OO$ of $\OO$.
The curve $E$ is a twist of a curve $E_0$ defined over
$H_\OO$, so we need to know how $\delta_{E_0/H_\OO}$
changes under twisting and base change to $K$.
As $\OO$ can be non-maximal, this is a somewhat involved story 
that we intend to address in more detail in future work.
We content ourselves with a second look Example \ref{example_cm}, which has $H_\OO=F$.

The elliptic curve $E: y^2=x^3+x^2-3x+1$ over $\Que$ 
from Example \ref{example_cm} has 
good reduction outside 2 and complex multiplication over $\overline\Q$ by the order $\OO=\Zee[\sqrt{-2}]$.
By Proposition \ref{prop:containment F in odd-division fields},
the CM-field $F=\Que(\sqrt{-2})$ is contained in all division fields 
$K_\ell$ for odd~$\ell$, but not in $K_2=\Que(\sqrt2)$.
Thus, $E(\F_p)$ has complete 2-torsion for $p\equiv\pm 1 \bmod 8$ 
and is cyclic for primes $p\equiv 5\mod 8$, 
which are inert in $K_2$ and in $F$.
Primes $p\equiv 3\mod 8$ yield two primes of norm $p$ in $F$
modulo which $E$ has point group $E(\F_p)$, so 
as in \eqref{prevdensity} we find
\[
    \delta_{E/\mathbf{Q}}=\textstyle \frac{1}{4} + \frac{1}{2}\delta_{E/F}.
\]
Over $F$, the Galois action on the torsion points of $E$ 
respects the $\OO$-module structure of 
$T=E^\tor(\overline\Que)\cong \lim_{\to n}\frac{1}{n}\OO/\OO$,
and CM-theory tells us that the associated Galois representation
\[ 
\rho_E: G_F\tto \Aut_\OO(T)=\widehat\OO^*
\]
realizes $\Gal(F(T)/F)$ as a subgroup of index at most 2 of
the unit group $\widehat\OO^*$ of the profinite completion 
$\widehat\OO=\lim_{\leftarrow n} \OO/n\OO$ of $\OO$.
On a finite level, the Galois group $\Gal(F_n/F)$ of the $n$-division field $F_n$ of $E$ over $F$ embeds into $(\OO/n\OO)^*$
as a subgroup of index 1 or 2. 
In our case, where $\OO$ has class number 1, the extension $F(T)$
generated by all torsion points of $E$ is abelian over $F=H_\OO$,
and therefore equal to the maximal abelian extension $F^\ab$ of $F$.
This implies that $\rho_E$ embeds 
$\Gal(F(T)/F)$ as an index-2 subgroup of $\widehat\OO^*$, which can also be described as the kernel of a continuous quadratic character
\[
\chi: \widehat{\mathcal{O}}^\ast = \prod_\ell \mathcal{O}_\ell \to \{\pm 1\} 
\]
where $\OO_\ell=\lim_{\leftarrow n} \OO/\ell^n\OO$ is the $\ell$-adic completion of $\mathcal{O}$.
For our curve $E$, the $\ell$-components of $\chi$ are all trivial if $\ell$ is odd 
(see \cite{Campagna-Pengo}*{Theorem 6.3}),
so the division fields $F_\ell$ for primes $\ell$ are linearly disjoint 
with maximal Galois groups $(\OO/\ell\OO)^*$ over $F$. 
For the cyclic reduction density over $F$, this means that we simply have
\[
\delta_{E/F}= A_\OO \approx .3403128,
\]
a figure that is nicely matched by $d_{E/F}(10^6)=.3416$.
The same holds for the densities
$\delta_{E/\Que}=\frac{1}{4}+\frac{1}{2} \delta_{E/F}$
and $\delta_{E/\Que(i)}=\frac{1}{2}$, as the fraction of primes 
$p\equiv 5\bmod8$ up to $10^6$ equals .2499, close to the limit value $\frac{1}{4}$.

In Example \ref{example_cm} we wrote $3=\pi\overline{\pi}$
with $\pi=1+\sqrt{-2}$, and twisted $E$ by $F(\sqrt{\pi+1})$
to obtain a curve 
$E_\pi: y^2+(\pi-1)xy + y = x^3+\pi x^2-\pi x$ 
for which the $\overline{\pi}$-division field $F_{\overline{\pi}}$
is not quadratic over $F$ but equal to it, and the
3-division field $F_3=F(\sqrt{-3})$ `minimal', with group 
$(\OO/3\OO)^*/\{\pm1\}$.
The family of $\ell$-division fields of $E_0$ is thus linearly disjoint over $K$ and we have
\[
\delta_{E_0/F}=\frac{1}{2} \cdot \frac{4}{3} \cdot A_\mathcal{O} \approx 0.2268752,
\]
in close agreement with the numerical value $d_{E_0/F}=.2269$ 
that we found.

\bibliographystyle{amsplain}

\end{document}